\soulregister\cite{7}
\soulregister\ref{7}
\newcommand{\R}{\mathbb R}
\newcommand{\Z}{\mathbb Z}
\newcommand{\T}{\mathbb T}
\numberwithin{equation}{section}
\newtheorem{theorem}{Theorem}[section]
\newtheorem{proposition}[theorem]{Proposition}
\newtheorem{remark}[theorem]{Remark}
\newtheorem{lemma}[theorem]{Lemma}
\begin{document}
%\today
\title[Nonlinear Schr\"odinger equations with  third order dispersion]{Sharp global well-posedness for the cubic nonlinear Schr\"odinger equation with third order dispersion}

\author{X. Carvajal}
\address{Instituto de Matem\'atica, UFRJ, 21941-909, Rio de Janeiro, RJ, Brazil}
\email{carvajal@im.ufrj.br}
\author{M. Panthee}
\address{Department of Mathematics, University of Campinas\\
13083-859, Campinas, S\~ao Paulo, SP,  Brazil}
\email{mpanthee@unicamp.br}

%%%%%%%%%%%%%%%%%%%%%%%%%%%%%%%%%%%%%%%%%%%%%%%%%%%%%%%%%%%%%%%%

%%%%%%%%%%%%%%%%%%%%%%%%%%%%%%%%%%%%%%%%%%%%%%%%%%%%%%%%%%%%

\maketitle

\begin{abstract} We consider the
 initial value problem (IVP) associated  to  the  cubic nonlinear Schr\"odinger  equation with third-order dispersion
 \begin{equation*}
\partial_{t}u+i\alpha 
\partial^{2}_{x}u- \partial^{3}_{x}u+i\beta|u|^{2}u  =  0,
 \quad x,t \in \R, 
\end{equation*}
for given data in the Sobolev space $H^s(\R)$. This IVP is known to be locally well-posed for given data with Sobolev regularity $s>-\frac14$ and globally well-posed for $s\geq 0$  \cite{XC-04}.  For given data in $H^s(\R)$, $0>s> -\frac14$  no global well-posedness result is known. In this work, we derive an {\em almost conserved quantity} for such data and obtain a sharp global well-posedness result. Our result answers the question left open in \cite{XC-04}.

\end{abstract}

\noindent
Key-words: Schr\"{o}dinger equation, Korteweg-de Vries equation, Initial value problem, Local and global well-posedness,  Sobolev spaces, Almost conservation law.

%%%%%%%%%%%%%%%%%%%%%%%%%%%%%%%%%%%%%%%%%%%%%%%%%%%
\section{Introduction}
%%%%%%%%%%%%%%%%%%%%%%%%%%%%%%%%%%%%%%%%%%%%%%%%%%%%%

In this work we  consider the initial value problem (IVP) associated to the  cubic nonlinear Schr\"odinger  equation with third-order dispersion 
 \begin{equation}\label{e-nls} 
  \begin{cases}
\partial_{t}u+i\alpha 
\partial^{2}_{x}u- \partial^{3}_{x}u+i\beta|u|^{2}u =  0,
 \quad x,t \in \R, \\
 u(x,0) = u_0(x),
 \end{cases}
\end{equation} 
where $\alpha,\beta\in \R $ and $u  = u(x, t)$ is  complex valued function.
 
 The equation in \eqref{e-nls}, also known as the extended nonlinear Schr\"odinger (e-NLS) equation,  appears to describe several physical phenomena like the nonlinear pulse propagation in an optical fiber, nonlinear modulation of a capillary gravity wave on water, for more details we refer to \cite{Agr-07}, \cite{XC-04}, \cite{DT-21}, \cite{HK-81}, \cite{MT-18}, \cite{Oikawa-93},  \cite{Tsutsumi-18} and references therein. In some literature, this model is also known as the third order Lugiato-Lefever equation \cite{MT-17} and can also be considered as a particular case of the higher order nonlinear Schr\"odinger (h-NLS)  equation proposed by  Hasegawa and Kodama in \cite{[H-K]} and \cite{[Ko]} to describe
 the  nonlinear propagation of pulses in optical fibers
\begin{equation*}\label{honse}
\begin{cases} 
\partial_{t}u-i\alpha 
\partial^{2}_{x}u+ \partial^{3}_{x}u-i\beta|u|^{2}u+\gamma
|u|^{2}\partial_{x}u+\delta\partial_{x}(|u|^2)u  =  0,
 \quad x,t \in \R,\\
  u(x,0) = u_0(x),
 \end{cases}
\end{equation*}
where $\alpha,\beta, \gamma\in \R $, $ \delta
  \in \mathbb{C}$ and  $u = u(x, t)$ are  complex valued function.

The well-posedness issues and  other properties of solutions of the IVP \eqref{e-nls}  posed on $\R$ or $\T$ have extensively been studied by several authors, see for example \cite{XC-04},  \cite{DT-21}, \cite{MT-17}, \cite{OTT-19}, \cite{CP-22} and references threrein. As far  as we know, the best local well-posedness result for the IVP~\eqref{e-nls} with given data in the $L^2$-based Sobolev spaces $H^s(\R)$, $s>-\frac14$,  is obtained by the first  author in \cite{XC-04}. More precisely, the following result was obtained in \cite{XC-04}. 
 
 \begin{theorem} \cite{XC-04}\label{LocalTh}
 Let $u_0\in H^s(\R)$ and $s>-\frac14$. Then  there exist
$\delta = \delta(\|u_0\|_{H^s})$ (with $\delta(\rho)\to \infty$ as $\rho\to 0$) and a unique solution to the IVP \eqref{e-nls} in the time interval $[0, \delta]$. Moreover, the solution satisfies the estimate
\begin{equation}\label{loc-est-2}
\|u\|_{X_{\delta}^{s, b}}\lesssim \|u_0\|_{H^s},
\end{equation}
 where the norm $\|u\|_{X_{\delta}^{s, b}}$ is as defined in \eqref{xsb-rest}.
 \end{theorem}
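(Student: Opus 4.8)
The plan is to prove the local well-posedness result stated as Theorem~\ref{LocalTh} by the now-standard Fourier restriction norm method (Bourgain spaces), adapted to the symbol $\alpha\xi^2-\xi^3$ of the linear operator. First I would set up the functional framework: define the unitary group $U(t)$ associated to the linear part $\partial_t u + i\alpha\partial_x^2 u - \partial_x^3 u = 0$, whose symbol on the Fourier side is $\phi(\xi)=\alpha\xi^2+\xi^3$ (up to sign conventions), and then introduce the Bourgain space $X^{s,b}$ with norm
\[
\|u\|_{X^{s,b}} = \big\|\langle\xi\rangle^s\langle\tau-\phi(\xi)\rangle^b\,\widehat{u}(\xi,\tau)\big\|_{L^2_{\xi,\tau}},
\]
together with its time-localized version $X^{s,b}_\delta$ (the norm \eqref{xsb-rest} referred to in the statement). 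The second step is to recast the IVP \eqref{e-nls} as the integral (Duhamel) equation
\[
u(t) = \psi(t)U(t)u_0 - i\beta\,\psi_\delta(t)\int_0^t U(t-t')\big(|u|^2u\big)(t')\,dt',
\]
where $\psi$ is a smooth temporal cutoff, and to show that the right-hand side is a contraction on a ball in $X^{s,b}_\delta$ for a suitable $b>\tfrac12$ and $\delta$ small depending on $\|u_0\|_{H^s}$.

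The estimates needed are the familiar ones: (i) the linear estimate $\|\psi(t)U(t)u_0\|_{X^{s,b}}\lesssim\|u_0\|_{H^s}$; (ii) the energy/Duhamel estimate $\big\|\psi_\delta\int_0^t U(t-t')F(t')\,dt'\big\|_{X^{s,b}}\lesssim \delta^{\theta}\|F\|_{X^{s,b-1}}$ for some $\theta>0$ coming from the time localization; and (iii) the crucial trilinear estimate
\[
\big\||u_1|\,u_2\,\bar u_3\text{-type products}\big\|_{X^{s,b-1}}\lesssim \prod_{j=1}^3\|u_j\|_{X^{s,b}},
\]
valid for $s>-\tfrac14$. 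Items (i) and (ii) are soft and standard. The heart of the matter — and the main obstacle — is the trilinear estimate (iii). After reducing it by duality to an $L^2$-based multiplier bound, one dyadically decomposes in both the spatial frequencies $\xi_j$ and the modulation variables $\sigma_j=\tau_j-\phi(\xi_j)$, and exploits the resonance identity: when $\xi_0=\xi_1-\xi_2+\xi_3$ and $\tau_0=\tau_1-\tau_2+\tau_3$, the quantity $\sigma_0-\sigma_1+\sigma_2-\sigma_3$ equals $\phi(\xi_1)-\phi(\xi_2)+\phi(\xi_3)-\phi(\xi_1-\xi_2+\xi_3)$, which for the cubic-plus-quadratic symbol factors and gives a lower bound forcing the largest modulation to be sizable relative to the frequencies. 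This gain, combined with $L^4$ Strichartz-type estimates (or bilinear $L^2$ estimates) for the third-order-dispersion group, yields the $\langle\xi\rangle^{-1/4+}$ room that pins the threshold at $s=-\tfrac14$; one has to check that the worst interaction (high$\times$high$\to$low, with two frequencies nearly cancelling) is exactly the one saturating the exponent.

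I would organize the computation so that the algebraic resonance lemma is isolated first (a pure computation on $\phi$), then the dyadic case analysis is driven by which of $|\sigma_0|,\dots,|\sigma_3|$ dominates, using Cauchy--Schwarz in the non-dominant modulation variables and a Strichartz or bilinear estimate in the dominant block. The quadratic term $\alpha\xi^2$ in the symbol is lower order compared to $\xi^3$ and does not change the analysis for large frequencies; for bounded frequencies the estimate is trivial. Once the trilinear estimate is in hand, the contraction mapping argument gives existence, uniqueness in the ball, and the bound \eqref{loc-est-2}; the blow-up alternative / scaling heuristic explains why $\delta(\rho)\to\infty$ as $\rho\to0$ (smaller data allows a longer existence time). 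I expect essentially all difficulty to be concentrated in establishing (iii) sharply at $s>-\tfrac14$; everything else is routine bookkeeping within the Bourgain-space machinery.
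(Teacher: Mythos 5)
Your proposal follows essentially the same route as the paper (and the cited reference \cite{XC-04}): Duhamel formulation in the Bourgain space $X^{s,b}$ adapted to the third-order symbol, standard linear/inhomogeneous estimates, and a contraction closed by the sharp trilinear estimate at $s>-\tfrac14$ obtained from the resonance factorization of the cubic phase together with Strichartz/bilinear $L^2$ bounds; the paper merely streamlines the symbol by a gauge transformation removing the $\alpha\partial_x^2$ term rather than treating it as a lower-order perturbation. The only bookkeeping point to fix is that the nonlinear estimate must land in $X^{s,b'}$ with $-\tfrac12<b'\le 0<b<b'+1$ (the paper takes $b'<s/3$), not in $X^{s,b-1}$ with $b>\tfrac12$, since otherwise the inhomogeneous estimate \eqref{nlin.1} yields no positive power of $\delta$.
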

 
To obtain this result, the author in \cite{XC-04} derived a trilinear estimate
 \begin{equation}
 \label{tri-xc}
 \|u_1u_2\bar{u_3}\|_{X^{s,b'}}\lesssim \prod_{j=1}^3\|u_j\|_{X^{s,b}}^3, \quad 0\geq s>-\frac14,\;\;
b>\frac{7}{12}, \;\: b'<\frac{s}3, 
 \end{equation}
 where, for $s,b\in\R$, $X^{s,b}$ is the Fourier transform restriction norm space introduced by Bourgain \cite{B-93}  with norm
 \begin{equation}
 \label{X-norm}
 \|u\|_{X^{s,b}}:=\|\langle\xi\rangle^s\langle\tau-\phi(\xi)\rangle^b\widehat{u}(\xi, \tau)\|_{L^2_{\xi}L^2_{\tau}},
 \end{equation}
 where $\langle x\rangle:=1+|x|$ and $\phi(\xi) $ is the phase function associated to the e-NLS equation \eqref{e-nls} (for detailed definition, see \eqref{xsb} below). The author in \cite{XC-04} also showed that the crucial trilinear estimate \eqref{tri-xc} fails for $s<-\frac14$. Further, it has been proved that the application data to solution fails to be $C^3$ at the origin if $s<-\frac14$, see  Theorem 1.3, iv) in \cite{XC-13}.  In this sense, the local well-posedness result given by Theorem \ref{LocalTh} is sharp using this method.
 
 \begin{remark}
 We note that, the following quantity 
 \begin{equation}
 \label{conserved-1}
 E(u): = \int_{\R}|u(x,t)|^2 dx,
 \end{equation}
 is conserved by the flow of \eqref{e-nls}. Using this conserved quantity, the local solution given by Theorem \ref{LocalTh} can be extended globally in time, thereby proving the global well-posedness of the IVP \eqref{e-nls} in $H^s(\R)$, whenever $s\geq 0$.
 \end{remark}
 
 Looking at the local well-posedness result given by Theorem \ref{LocalTh} and the Remark above, it is clear that there is a gap between the local and  the global well-posedness results. In other words, one may ask the following natural question. Is it possible  that the local solution given by  Theorem \ref{LocalTh} can be extended globally in time for $ 0>s>-\frac14$? 
 
 The main objective of this work is to answer the question raised in the previous paragraph that is left open in \cite{XC-04} since 2004. In other words, the main focus of this work is  in investigating the global  well-posedness issue of the IVP \eqref{e-nls} for given data in the low regularity Sobolev spaces  $H^s(\R)$, $0>s>-\frac14$. No conserved quantities are available for data with regularity below $L^2$ to apply the classical method to extend the local solution globally in time. To overcome this difficulty  we use the  famous {\em I-method} introduced by Colliander et al \cite{CKSTT1, CKSTT2, CKSTT} and derive an {\em almost conserved quantity} to obtain the global 
 well-posedness result for given data in the low regularity Sobolev spaces. More precisely, the main result of this work is the following.
 
\begin{theorem}
\label{Global-Th}
The  IVP \eqref{e-nls} is globally well-posed for any initial data  $ u_0\in H^s(\R)$, $s>-\frac14$.
\end{theorem}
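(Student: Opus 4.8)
\emph{Proof sketch.} The plan is to run the I-method of Colliander--Keel--Staffilani--Takaoka--Tao, the only conservation law available below $L^2$ being the mass \eqref{conserved-1}. Fix $-\tfrac14<s<0$, the range $s\ge0$ being already known, and for a large dyadic parameter $N$ let $I=I_N$ be the Fourier multiplier with smooth, even, nonincreasing symbol $m_N(\xi)$ satisfying $m_N(\xi)=1$ for $|\xi|\le N$ and $m_N(\xi)=(|\xi|/N)^{s}$ for $|\xi|\ge 2N$. Since $s<0$ one has $\langle\xi\rangle^s\lesssim m_N(\xi)\lesssim N^{-s}\langle\xi\rangle^s$, hence
\begin{equation*}
\|u\|_{H^s}\lesssim\|I_Nu\|_{L^2}\lesssim N^{-s}\|u\|_{H^s}.
\end{equation*}
Consequently an a priori bound on $\|I_Nu(t)\|_{L^2}$ over $[0,T]$ gives one on $\|u(t)\|_{H^s}$ there; combined with the subcritical local theory of Theorem \ref{LocalTh} (whose existence time satisfies $\delta(\rho)\to\infty$ as $\rho\to0$), this reduces Theorem \ref{Global-Th} to propagating control of the modified mass $t\mapsto\|I_Nu(t)\|_{L^2}^{2}$ over arbitrarily long time intervals.

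First I would set up a local theory adapted to $I$. Revisiting the proof of the trilinear estimate \eqref{tri-xc} and inserting the weight $m_N$ should yield, for $b,b'$ as there (in particular $b>\tfrac7{12}$), the $I$-modified estimate $\|I(u_1u_2\bar u_3)\|_{X^{0,b'}}\lesssim\prod_{j=1}^{3}\|Iu_j\|_{X^{0,b}}$. The mechanism is that, for $\xi=\xi_1-\xi_2+\xi_3$, the ratio $m_N(\xi)/\bigl(m_N(\xi_1)m_N(\xi_2)m_N(\xi_3)\bigr)$ is $\lesssim1$ unless at least two of $|\xi_1|,|\xi_2|,|\xi_3|$ exceed $N$, and in that remaining high$\times$high$\to$low regime the growth of the ratio is absorbed by the slack in \eqref{tri-xc}, whose left-hand weight $\langle\xi\rangle^s$ is comparatively large (the output frequency being low) while its right-hand weights $\langle\xi_j\rangle^s$ are very small. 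A standard contraction argument then gives: if $\|Iu_0\|_{L^2}\le\mu$ there are $\delta\sim(1+\mu)^{-\theta}$, for some fixed $\theta=\theta(s)>0$ coming from the $\delta$-power in the Duhamel estimate on $X^{0,b}_\delta$, and a solution of \eqref{e-nls} on $[0,\delta]$ with $\|Iu\|_{X^{0,b}_\delta}\lesssim\mu$.

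The heart of the matter is the almost conservation law: for such solutions,
\begin{equation*}
\bigl|\,\|Iu(\delta)\|_{L^2}^{2}-\|Iu_0\|_{L^2}^{2}\,\bigr|\lesssim N^{-\gamma}\,\|Iu\|_{X^{0,b}_\delta}^{4}\lesssim N^{-\gamma}\mu^{4}
\end{equation*}
for some $\gamma=\gamma(s)>0$. Differentiating the modified mass along the flow, and using that $-i\alpha\partial_x^2+\partial_x^3$ is skew-adjoint and $\int|Iu|^4\,dx\in\R$,
\begin{equation*}
\frac{d}{dt}\|Iu(t)\|_{L^2}^{2}=2\beta\,\mathrm{Im}\bigl(\langle Iu,I(|u|^2u)\rangle_{L^2}-\langle Iu,|Iu|^2Iu\rangle_{L^2}\bigr),
\end{equation*}
which on the Fourier side is a space--time $4$-linear form in $v:=Iu$ carrying the multiplier $\mathcal M(\xi_1,\xi_2,\xi_3)=m_N(\xi_1-\xi_2+\xi_3)/\bigl(m_N(\xi_1)m_N(\xi_2)m_N(\xi_3)\bigr)-1$. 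This $\mathcal M$ is supported where $\max_j|\xi_j|\gtrsim N$, and there at least two frequencies are $\gtrsim N$ and nearly cancel (the output frequency being forced low). Integrating over $[0,\delta]$ and estimating by duality, using the bilinear and $L^4$ Strichartz bounds for $e^{t(-\alpha\partial_x^2+\partial_x^3)}$ underlying \eqref{tri-xc} together with the factorization of the resonance function (for $\phi(\xi)=\xi^3-\alpha\xi^2$, up to the sign convention)
\begin{equation*}
\phi(\xi)-\phi(\xi_1)+\phi(\xi_2)-\phi(\xi_3)=(\xi-\xi_1)(\xi-\xi_3)\bigl(3(\xi_1+\xi_3)-2\alpha\bigr),
\end{equation*}
which is large in precisely this configuration, one extracts the decay $N^{-\gamma}$.

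Finally one iterates. Put $\mu:=\|Iu_0\|_{L^2}\le CN^{-s}\|u_0\|_{H^s}$. A continuity argument shows that while $\|Iu(t)\|_{L^2}\le2\mu$, each step of length $\delta\sim\mu^{-\theta}$ raises the modified mass by at most $CN^{-\gamma}\mu^{4}$, so this bound persists up to time $T^\ast\gtrsim N^{\gamma}\mu^{-(2+\theta)}\sim N^{\gamma-|s|(2+\theta)}\|u_0\|_{H^s}^{-(2+\theta)}$. If the multilinear analysis can be arranged so that $\gamma(s)>|s|\,(2+\theta(s))$ for every $s>-\tfrac14$, then $T^\ast\to\infty$ as $N\to\infty$, and for any prescribed $T$ one chooses $N$ large to obtain the solution on $[0,T]$ with $\sup_{[0,T]}\|u(t)\|_{H^s}\lesssim\sup_{[0,T]}\|Iu(t)\|_{L^2}\lesssim N^{-s}\|u_0\|_{H^s}$, which completes the proof. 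The decisive obstacle is this last quantitative balance: making the decay rate $\gamma$ of the almost conservation law large enough relative to the local-existence exponent $\theta$ to reach uniformly down to $s=-\tfrac14$. This is exactly where the third-order dispersion must be exploited in full, through the resonance factorization above and the sharp bilinear estimates; and it is where, should the first-generation modified mass turn out to be insufficient, one would add a multilinear correction term to $\|Iu\|_{L^2}^2$ cancelling the leading part of the increment.
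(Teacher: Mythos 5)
Your outline is the right one and matches the paper's strategy in broad terms: the sandwich $\|u\|_{H^s}\lesssim\|I_Nu\|_{L^2}\lesssim N^{-s}\|u\|_{H^s}$, the $I$-modified local theory obtained by interpolating the trilinear estimate \eqref{tri-xc} (this is exactly Theorem \ref{local-variant}), the resonance factorization (which the paper sidesteps by first applying the gauge transform \eqref{gauge} to remove $\alpha$ and reduce the phase to $\xi^3$, where $\xi_1^3+\xi_2^3+\xi_3^3+\xi_4^3=3\xi_{12}\xi_{13}\xi_{14}$ on $\Gamma_4$), and the iteration bookkeeping leading to a condition of the form $\gamma>|s|(2+\theta)$. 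But there is a genuine gap precisely at what you call the ``decisive obstacle'': the almost conservation law is never established with any concrete decay rate $\gamma$, so the argument remains conditional on an inequality whose verification is the entire analytic content of the theorem.

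Moreover, your default object --- the first-generation quantity $\|Iu(t)\|_{L^2}^2$ with the commutator multiplier $\mathcal M$ --- is not what the paper estimates; the ``fallback'' you mention only in your last sentence is in fact the main construction. The paper passes to the second-generation energy $E^2_I(u)=\|Iu\|_{L^2}^2+\Lambda_4(M_4;u)$, with $M_4$ chosen so that the quadrilinear increment cancels identically; the symmetrized correction multiplier is $\delta_4=\beta\,(m_1^2-m_2^2+m_3^2-m_4^2)/(6\xi_{12}\xi_{13}\xi_{14})$ as in \eqref{m.4s}, and then $\frac{d}{dt}E^2_I=\Lambda_6(M_6)$ by \eqref{second-m3}. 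The real work consists of (i) pointwise bounds on $\delta_4$ in every frequency configuration (Proposition \ref{prop3.3}, a delicate case analysis in which the structure of this multiplier differs from the KdV and NLS ones), (ii) the bound $|\Lambda_4(\delta_4;u)|\lesssim N^{-(\frac54-3s)}\|Iu\|_{L^2}^4$ of \eqref{lambda4}, which guarantees that $E^2_I$ still controls $\|Iu\|_{L^2}^2$, and (iii) the six-linear estimate $\bigl|\int_0^\delta\Lambda_6(\delta_6)\bigr|\lesssim N^{-\frac74}\|Iu\|_{X^{0,\frac12+}_\delta}^6$ of \eqref{lambda6}, proved with the Strichartz-type bounds of Lemma \ref{Lema36}. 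With the decay $N^{-\frac74}$ in hand, the paper closes the iteration by rescaling ($u^\lambda$ with $\lambda\sim N^{-s/(1+s)}$) so that $\|Iu_0^\lambda\|_{L^2}\le\epsilon$ and each local step has unit length, arriving at the condition $TN^{\frac{-7-19s}{4(1+s)}}\le c$ in \eqref{g-8}, which holds for all $s>-\frac{7}{19}$ and in particular on $(-\frac14,0)$. Until you either commit to the second-generation energy or show that the first generation already yields a sufficiently large $\gamma$ (which is not at all clear), and actually derive the multiplier bounds and the resulting decay, the proof is incomplete.
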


\begin{remark}
In the proof of this theorem, an almost conservation of the second generation of the  modified energy, viz.,
$$|E^2_I(u(\delta))|\leq |E^2_I(\phi)| + C N^{-\frac74}\|Iu\|_{X^{0, \frac12+}_{\delta}}^6$$
plays a crucial role. The decay  $N^{-\frac74}$ is more than enough to get the required result. Behind the proof of an almost conservation law, there are decay estimates of the multipliers involved. Structure of the multipliers in our case is different from the ones that appear in the case of the KdV or the NLS  equations, see for example  \cite{XC-06}, \cite{CKSTT2} and \cite{CKSTT}.  This fact creates some extra difficulties as can be seen in the proof of Proposition \ref{prop3.3}.
\end{remark}
 
 The well-posedness issues of the  IVP \eqref{e-nls} posed on the periodic domain $\T:=\R/2\pi\Z$ are also considered by several authors in recent time. The authors in \cite{MT-17} studied the  IVP \eqref{e-nls} considering that $\frac{2\alpha}3\notin \Z$ with data $u_0\in L^2(\T)$  and  obtained the global existence of the solution.  They also obtained the global attractor in $L^2(\T)$. The local existence result obtained in \cite{MT-17} is further improved  in  \cite{MT-18} for given data in the Sobolev spaces $H^s(\T)$  with $s>-\frac16$ (see also \cite{Tsutsumi-18}) with the same consideration. 
 
 Taking in consideration the  results in \cite{MT-17} and  \cite{MT-18}, there is a gap between the local and  the global well-posedness results in the periodic case too. In other words, one has the following natural question. Is it possible to extend to local solution to the IVP \eqref{e-nls} posed on periodic domain $\T$ can be extended globally in time for given data in $H^s(\T)$, $0>s>-\frac16$? Although this is a very good question, deriving {\em almost conserved quantities} in the periodic setting is more demanding and we will not consider it here.

 In recent time, other properties of solutions of the IVP \eqref{e-nls} have also been studied in the literature. The authors in \cite{OTT-19} proved  that the mean-zero Gaussian measures on Sobolev spaces $H^s(\T)$ are quasi-invariant under the flow whenever $s >\frac34$. This result is further improved in \cite{DT-21}  on Sobolev spaces $H^s(\T)$ for $s>\frac12$. Quite recently, in  \cite{CP-22}, we considered the IVP \eqref{e-nls}  with given  data in the modulation spaces $M_s^{2,p}(\R) $ and obtained the local well-posedness result for   $s> -\frac14$ and $2\leq p<\infty$.

Now we present the organization of this work. In Section
 \ref{sec-2}, we define  function spaces and provide some preliminary results. In Section \ref{sec-3} we introduce multilinear estimates and an {\em almost conservation law} that is fundamental to prove the  main result of this work. In Section \ref{sec-4} we provide the proof of the  main result of this paper.  
 We finish this section recording some standard notations that will be used throughout this work.\\

 \noindent
{\textbf{Notations:}} We use $c$ to denote various  constants whose exact values are immaterial and may
 vary from one line to the next. We use $A\lesssim B$ to denote an estimate
of the form $A\leq cB$ and $A\sim B$ if $A\leq cB$ and $B\leq cA$. Also, we
use the notation $a+$ to denote $a+\epsilon$ for $0< \epsilon \ll 1$.

%%%%%%%%%%%%%%%%%%%%%%%%%%%%%%%%%%%%%%%%%%%%%%%%

\section{Function spaces and preliminary results}\label{sec-2}

%%%%%%%%%%%%%%%%%%%%%%%%%%%%%%%%%%%%%%%%%%%%%%%%
We start this section by introducing some function spaces that will be used throughout this work. For $f:\R\times [0, T] \to \R$ we define the mixed
 $L_x^pL_T^q$-norm by
\begin{equation*}
\|f\|_{L_x^pL_T^q} = \left(\int_{\R}\left(\int_0^T |f(x, t)|^q\,dt
\right)^{p/q}\,dx\right)^{1/p},
\end{equation*}
with usual modifications when $p = \infty$. We replace $T$ by $t$ if $[0, T]$ is the whole real line $\R$.

We use $\widehat{f}(\xi)$ to denote  the Fourier transform of $f(x)$ defined by
$$
\widehat{f}(\xi) = c \int_{\R}e^{-ix\xi}f(x)dx$$
and
$\widetilde{f}(\xi)$ to denote  the Fourier transform of $f(x,t)$ defined by
$$
\widetilde{f}(\xi, \tau) = c \int_{\R^2}e^{-i(x\xi+t\tau)}f(x,t)dxdt.$$

We use $H^s$  to denote the $L^2$-based Sobolev space of order $s$ with norm
$$\|f\|_{H^s(\R)} = \|\langle \xi\rangle^s \widehat{f}\|_{L^2_{\xi}},$$
where $\langle \xi\rangle = 1+|\xi|$.

In order to simplify the presentation we  consider the following gauge transform considered in \cite{Takaoka}
\begin{equation}\label{gauge}
u(x,t) := v(x-d_1t, -t)e^{i(d_2x+d_3t)}.
\end{equation}

Using this transformation the IVP \eqref{e-nls} turns out to be
\begin{equation}\label{e-nls1} 
\!\!\!\!\!\!\!\begin{cases}
\partial_{t}v+ \partial^{3}_{x}v-i(\alpha -3d_2)
\partial^{2}_{x}v+(d_1+2\alpha d_2-3d_2^2)\partial_x v -i(d_2^3-\alpha d_2^2 +d_3) v - i\beta|v|^2v  =  0,\\
v(x,0) = v_0(x) := u_0(x)e^{-id_2x}.
\end{cases}
\end{equation} 
   
 If one chooses $d_1=-\frac{\alpha^2}3$, $d_2= \frac{\alpha}3$ and $d_3=\frac{2\alpha^3}{27}$                the third, fourth and fifth terms in the first equation in \eqref{e-nls1} vanish. Also, we note that
 $$\|u_0\|_{H^s}\sim \|v_0\|_{H^s}.
 $$

 So from now on, we will consider the IVP \eqref{e-nls} with $\alpha = 0$, more precisely,
\begin{equation}\label{e-nlsT} 
  \begin{cases}
\partial_{t}u+ \partial^{3}_{x}u-i\beta|u|^{2}u  =  0,
 \quad x,t \in \R, \\
 u(x,0) = u_0(x).
 \end{cases}
\end{equation}  

This simplification allows us to work in  the  Fourier transform restriction norm  space restricted to the cubic $\tau-\xi^3$. In what follows we formally introduce the Fourier transform restriction norm  space,  commonly known as the Bourgain's space.

For $s, b \in \R$,  we define the Fourier transform restriction norm space $X^{s,b}(\R\times\R)$ with norm
\begin{equation}\label{xsb}
\|f\|_{ X^{s, b}} = \|(1+D_t)^b U(t)f\|_{L^{2}_{t}(H^{s}_{x})} = \|\langle\tau-\xi^3\rangle^b\langle \xi\rangle^s \widetilde{f}\|_{L^2_{\xi,\tau}},
\end{equation}
 where $U(t) = e^{-t\partial^{3}_{x}}$ is the unitary group.

If $b> \frac12$, the Sobolev lemma imply that, $ X^{s, b} \subset C(\R ; H^s_x(\R)).$ For any interval $I$, we define the localized spaces $X^{s,b}_I:= X^{s,b}(\R\times I)$ with norm
\begin{equation}\label{xsb-rest}
\|f\|_{ X^{s, b}(\R\times I)} = \inf\big\{\|g\|_{X^{s, b}};\; g |_{\R\times I} = f\big\}.
\end{equation}
Sometimes we use the definition $X^{s,b}_{\delta}:=\|f\|_{ X^{s, b}(\R\times [0, \delta])}$.\\

We define a cut-off function $\psi_1 \in C^{\infty}(\R;\; \R^+)$ which is even, such that $0\leq \psi_1\leq 1$ and
\begin{equation}\label{cut-1}
\psi_1(t) = \begin{cases} 1, \quad |t|\leq 1,\\
                          0, \quad |t|\geq 2.
            \end{cases}
\end{equation}
We also define $\psi_T(t) = \psi_1(t/T)$, for $0\leq T\leq 1$.

In the following lemma we list some estimates that are crucial in the proof of the local well-posedness result whose proof can be found in \cite{GTV}.
\begin{lemma}\label{lemma1}
For any $s, b \in \R$, we have
\begin{equation}\label{lin.1}
\|\psi_1U(t)\phi\|_{X^{s,b}}\leq C \|\phi\|_{H^s}.
\end{equation}
Further, if  $-\frac12<b'\leq 0\leq b<b'+1$ and $0\leq \delta\leq 1$, then
\begin{equation}\label{nlin.1}
\|\psi_{\delta}\int_0^tU(t-t')f(u(t'))dt'\|_{X^{s,b}}\lesssim \delta^{1-b+b'}\|f(u)\|_{X^{s, b'}}.
\end{equation}

\end{lemma}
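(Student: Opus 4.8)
For the linear estimate \eqref{lin.1} I would argue directly on the Fourier side using the characterization \eqref{xsb}. Since $\widehat{U(t)\phi}(\xi)=e^{it\xi^3}\widehat\phi(\xi)$, for each fixed $\xi$ the function $t\mapsto\psi_1(t)U(t)\phi$ has $t$--Fourier transform $\widehat{\psi_1}(\tau-\xi^3)\widehat\phi(\xi)$, so that
\begin{equation*}
\|\psi_1U(t)\phi\|_{X^{s,b}}^2=\int_{\R}\int_{\R}\langle\tau-\xi^3\rangle^{2b}\langle\xi\rangle^{2s}\,|\widehat{\psi_1}(\tau-\xi^3)|^2\,|\widehat\phi(\xi)|^2\,d\tau\,d\xi .
\end{equation*}
The change of variables $\mu=\tau-\xi^3$ decouples the two integrals and produces $\|\psi_1U(t)\phi\|_{X^{s,b}}^2=\|\psi_1\|_{H^b_t}^2\,\|\phi\|_{H^s}^2$. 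Because $\psi_1$ is a fixed, smooth, compactly supported function, $\|\psi_1\|_{H^b_t}<\infty$ for every $b\in\R$, and this finite number is the constant $C$; this proves \eqref{lin.1}.

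For the Duhamel estimate \eqref{nlin.1} the key structural step is to conjugate by the free evolution. Writing $F(t):=U(-t)f(t)$ and using $U(t-t')=U(t)U(-t')$, the Duhamel term factors as $w(t):=\int_0^tU(t-t')f(t')\,dt'=U(t)G(t)$ with $G(t):=\int_0^tF(t')\,dt'$. Since multiplication by the scalar $\psi_\delta(t)$ commutes with $U(-t)$, the same Fourier computation as above (now applied to $\psi_\delta\,\widehat G(\xi,\cdot)$) gives
\begin{equation*}
\|\psi_\delta w\|_{X^{s,b}}^2=\int_{\R}\langle\xi\rangle^{2s}\,\big\|\psi_\delta\,\widehat G(\xi,\cdot)\big\|_{H^b_t}^2\,d\xi,\qquad \widehat G(\xi,t)=\int_0^th_\xi(t')\,dt',
\end{equation*}
where $h_\xi(t):=\widehat F(\xi,t)=e^{-it\xi^3}\widehat f(\xi,t)$, and likewise $\|f\|_{X^{s,b'}}^2=\int_{\R}\langle\xi\rangle^{2s}\|h_\xi\|_{H^{b'}_t}^2\,d\xi$. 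By Plancherel in $\xi$, with the weight $\langle\xi\rangle^s$ carried along passively, everything reduces to the single scalar inequality
\begin{equation}\label{one-d}
\Big\|\psi_\delta\int_0^th(s)\,ds\Big\|_{H^b_t}\lesssim \delta^{1-b+b'}\,\|h\|_{H^{b'}_t},\qquad h\in H^{b'}(\R),
\end{equation}
with an implicit constant independent of $\xi$ and of $\delta\in[0,1]$.

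It remains to establish \eqref{one-d}. I would start from the representation $\int_0^th(s)\,ds=c\int\frac{e^{it\tau}-1}{i\tau}\,\widehat h(\tau)\,d\tau$ and split the $\tau$--integral according to $|\tau|\lesssim\delta^{-1}$ and $|\tau|\gtrsim\delta^{-1}$. On the high-modulation piece the factor $1/\tau$ gains a power of $\delta$, and the two resulting terms $e^{it\tau}/(i\tau)$ and the $t$--independent $1/(i\tau)$ are estimated directly in $H^b_t$ after multiplication by $\psi_\delta$. On the low-modulation piece one has $|t\tau|\lesssim1$ on the support of $\psi_\delta$, so Taylor-expanding $e^{it\tau}-1=\sum_{k\geq1}(it\tau)^k/k!$ turns the antiderivative into a convergent series of terms involving $t^k\tau^{k-1}$; each power of $t$ is absorbed by $\psi_\delta$ at the cost of a factor $\delta$, and summing yields the net gain. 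Tracking the powers of $\delta$ together with the mismatch $b-b'<1$ reproduces exactly the exponent $1-b+b'$, and the hypotheses $-\frac12<b'\leq0\leq b<b'+1$ are precisely what make both pieces finite and summable.

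The main obstacle is the genuinely delicate interaction in \eqref{one-d} between the cutoff $\psi_\delta$, whose Fourier transform is a bump of width $\sim\delta^{-1}$ and therefore spreads frequencies, and the frequency localization of $h$: multiplication by $\psi_\delta$ does not preserve frequency support, so the weight $\langle\tau\rangle^b$ cannot be handled by a naive product estimate. This is exactly the point that forces the Taylor expansion above (equivalently, fractional-Leibniz/Moser-type bounds in $H^b_t$ combined with $\|\psi_\delta\|_{H^b_t}\lesssim\delta^{1/2-b}$). These one-dimensional computations are classical and are carried out in detail in \cite{GTV}.
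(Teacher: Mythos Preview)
Your proposal is correct and follows exactly the classical route of Ginibre--Tsutsumi--Velo: the paper itself does not prove this lemma but simply refers to \cite{GTV}, and what you have written is a faithful sketch of the argument there (Fourier computation for \eqref{lin.1}, conjugation by $U(-t)$ to reduce \eqref{nlin.1} to the scalar $H^b_t$ inequality, then the low/high $|\tau|\lessgtr\delta^{-1}$ splitting with Taylor expansion on the low piece). In this sense you have supplied more detail than the paper does, while landing on the same reference for the remaining technicalities.
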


As mentioned in the introduction, our main objective is to prove the global well-posedness result for the low regularity data. Using the  $L^2$ conservation law \eqref{conserved-1} we have the global well-posedness of the IVP \eqref{e-nlsT} for given data in 
$H^s(\R),$ $s\geq 0$. So, from now on we suppose $0>s>-\frac14$ throughout
this work. \\

Our aim is to derive an {\em almost conserved quantity} and use it to prove Theorem  \ref{Global-Th}. For this, we  use the {\em I-method} introduced in \cite{CKSTT} and define the Fourier
multiplier operator $I$ by,
\begin{equation}\label{I-1}
\widehat{Iu}(\xi) = m(\xi) \widehat{u}(\xi),
\end{equation}
 where $m(\xi)$ is a smooth, radially symmetric and nonincreasing function given by
\begin{equation}\label{m-1}
m(\xi) =
\begin{cases}
1, \quad \quad \quad\,\,\,\,\,\,\,\;\;|\xi|< N, \\ 
N^{-s}|\xi|^s, \quad \quad |\xi|\geq 2N,
\end{cases}
\end{equation} 
with $N\gg 1$ to be fixed later.

Note that, $I$ is the identity operator in low frequencies, $\{\xi :
|\xi|< N\}$, and simply an integral operator in high frequencies. In
general, it commutes with differential operators and satisfies the
following property.

\begin{lemma} \label{gwplem1}
Let $0>s>-\frac14$ and $N\geq 1$. Then the operator $I$ maps $H^s(\R)$ to
$L^2(\R)$ and
\begin{equation}
\label{gwlem12}
\|I f\|_{L^2(\R)} \lesssim N^{-s}\|f\|_{H^s(\R)}.
\end{equation}
\end{lemma}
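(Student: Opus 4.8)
The plan is to transfer the bound on the operator $I$ to a pointwise bound on its Fourier symbol $m$ and then invoke Plancherel's theorem. By the definition \eqref{I-1} and Plancherel,
\begin{equation*}
\|If\|_{L^2(\R)}^2=\int_\R |m(\xi)|^2\,|\widehat f(\xi)|^2\,d\xi,
\end{equation*}
so \eqref{gwlem12} follows once we establish that there is a constant $C=C(s)$ with
\begin{equation*}
m(\xi)\le C\,N^{-s}\langle\xi\rangle^s\qquad\text{for all }\xi\in\R.
\end{equation*}
Indeed, squaring this inequality and integrating against $|\widehat f(\xi)|^2$ gives $\|If\|_{L^2}^2\le C^2 N^{-2s}\|f\|_{H^s}^2$, which is the claim.

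To prove the pointwise inequality I would argue by cases according to \eqref{m-1}, using that $s<0$, that $\langle\xi\rangle=1+|\xi|\sim|\xi|$ when $|\xi|\gtrsim 1$, and that $m$ is nonincreasing with $0<m\le 1$. For $|\xi|<N$ we have $m(\xi)=1$ and $\langle\xi\rangle\le 1+N\le 2N$, so (since $s<0$) $N^{-s}\langle\xi\rangle^s\ge N^{-s}(2N)^s=2^s$, hence $m(\xi)=1\le 2^{-s}\,N^{-s}\langle\xi\rangle^s$. For $|\xi|\ge 2N$ we have $m(\xi)=N^{-s}|\xi|^s$ and $\langle\xi\rangle\le 2|\xi|$, so $|\xi|^s\le 2^{-s}\langle\xi\rangle^s$ and therefore $m(\xi)\le 2^{-s}\,N^{-s}\langle\xi\rangle^s$. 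Finally, in the transition region $N\le|\xi|<2N$ one has $\langle\xi\rangle\sim N$, whence $N^{-s}\langle\xi\rangle^s\sim 1$, while $m(\xi)\le 1$ because $m$ is nonincreasing and equals $1$ on $|\xi|<N$; this again yields $m(\xi)\lesssim N^{-s}\langle\xi\rangle^s$. Taking $C$ to be the largest of the three implied constants finishes the argument.

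Since every step is completely elementary, I do not anticipate a genuine obstacle. The only mildly delicate point is the transition region $N\le|\xi|<2N$, where the explicit formula for $m$ is not prescribed and one must rely solely on the monotonicity of $m$ and the normalization $m\le 1$, together with the comparison $\langle\xi\rangle\sim N$ valid there.
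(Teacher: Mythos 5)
Your proof is correct and complete; the paper states this lemma without proof, and your argument (Plancherel plus the pointwise symbol bound $m(\xi)\lesssim N^{-s}\langle\xi\rangle^s$, checked on the three regions $|\xi|<N$, $N\le|\xi|<2N$, $|\xi|\ge 2N$ using $s<0$, $N\ge 1$, and the monotonicity and normalization of $m$) is exactly the standard one from the I-method literature. Nothing is missing.
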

 
Now record a variant of the local well-posedness result for initial data $u_0 \in H^s$, $0>s>-\frac14$  such that $Iu_0\in L^2$. More precisely we have the following result which will be very useful in the proof of the global well-posedness theorem.

\begin{theorem}\label{local-variant}
Let $0>s>-\frac14$, then for any $u_0$ such that $Iu_0\in L^2$, there exist
$\delta = \delta(\|Iu_0\|_{L^2})$ (with $\delta(\rho)\to \infty$ as $\rho\to 0$) and a unique solution to the IVP \eqref{e-nlsT} in the time interval $[0, \delta]$. Moreover, the solution satisfies the estimate
\begin{equation}\label{variant-2}
\|Iu\|_{X_{\delta}^{0, b}}\lesssim \|Iu_0\|_{L^2},
\end{equation}
and the local existence time $\delta$ can be chosen satisfying
\begin{equation}\label{delta-var}
\delta \lesssim \|Iu_0\|_{L^2}^{-\theta},
\end{equation}
where $\theta>0$ is some constant.
\end{theorem}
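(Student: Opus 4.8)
The plan is to transfer the local well-posedness result of Theorem~\ref{LocalTh} to the $I$-modified setting by a rescaling argument together with the standard contraction mapping scheme in the Bourgain spaces. Since we have already reduced to the case $\alpha=0$, i.e.\ to the IVP~\eqref{e-nlsT}, and the operator $I$ commutes with $\partial_x$, applying $I$ to the equation gives
\begin{equation*}
\partial_t (Iu) + \partial_x^3 (Iu) - i\beta\, I(|u|^2u) = 0, \qquad Iu(x,0) = Iu_0(x),
\end{equation*}
so $Iu$ solves the Duhamel equation $Iu(t) = U(t)Iu_0 + i\beta\int_0^t U(t-t') I(|u|^2u)(t')\,dt'$. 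First I would set up the contraction in the closed ball of $X^{0,b}_\delta$ for suitable $b>\tfrac12$, using the linear estimate \eqref{lin.1} and the Duhamel estimate \eqref{nlin.1} from Lemma~\ref{lemma1}. The only nontrivial input is a trilinear estimate of the form
\begin{equation}\label{plan-tri}
\|I(u_1 u_2 \bar u_3)\|_{X^{0,b'}} \lesssim \prod_{j=1}^3 \|Iu_j\|_{X^{0,b}},
\end{equation}
with $b>\tfrac{7}{12}$ and $b'$ slightly negative, which plays the role of \eqref{tri-xc} with the $I$-operator inserted. This estimate follows from the interpolation/commutator properties of $I$: since $m$ is nonincreasing and $|\xi_0|\le |\xi_1|+|\xi_2|+|\xi_3|$ on the convolution hyperplane (where $\xi_0$ is the output frequency), one has $m(\xi_0)\lesssim m(\xi_1)m(\xi_2)m(\xi_3)^{-1}$-type bounds, or more cleanly one uses that the bilinear/trilinear $X^{s,b}$ estimates of \cite{XC-04} at regularity $s$ can be combined with the boundedness of $I$ from $H^s$-based to $L^2$-based spaces; this is exactly the mechanism that makes the $I$-method work and it is by now standard (cf. \cite{CKSTT}). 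Concretely, \eqref{plan-tri} at $s=0$ with $I$ inserted is implied by \eqref{tri-xc} after distributing $m$ among the factors.

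Granting \eqref{plan-tri}, the contraction mapping argument on $[0,\delta]$ closes provided
\begin{equation*}
\delta^{1-b+b'}\, \|Iu_0\|_{L^2}^2 \lesssim 1,
\end{equation*}
which gives a local existence time $\delta \sim \|Iu_0\|_{L^2}^{-\theta}$ with $\theta = \frac{2}{1-b+b'}>0$, establishing \eqref{delta-var}, and the fixed point satisfies $\|Iu\|_{X^{0,b}_\delta}\lesssim \|Iu_0\|_{L^2}$, which is \eqref{variant-2}. Uniqueness in the ball follows from the same contraction estimate, and uniqueness in the full space $X^{0,b}_\delta$ follows by a standard continuity/connectedness argument. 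Note that $\delta(\rho)\to\infty$ as $\rho\to 0$ is immediate from the displayed scaling of $\delta$. One should also record that $Iu_0\in L^2$ is guaranteed by Lemma~\ref{gwplem1} when $u_0\in H^s$, so the hypothesis is consistent; and since $m\equiv 1$ on $|\xi|<N$, on the low-frequency part $Iu$ coincides with $u$, so this local theory is genuinely a refinement of Theorem~\ref{LocalTh}.

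I expect the main obstacle to be establishing the $I$-modified trilinear estimate \eqref{plan-tri} with a constant independent of $N$ — this is where the extra derivative loss of the $I$-operator on the high-frequency interactions must be controlled, and one has to check that inserting $m$ does not destroy the delicate balance in the proof of \eqref{tri-xc} from \cite{XC-04}. The cleanest route is an interpolation lemma à la \cite{CKSTT}: prove that if a trilinear estimate holds at the $X^{s,b}$ level then the corresponding estimate holds at the $X^{0,b}$ level with $I$ inserted, with the implied constant uniform in $N\ge 1$. Once that lemma is in place, all the remaining steps are routine fixed-point arguments identical in form to the proof of Theorem~\ref{LocalTh}, and $\delta$, $\theta$ are read off from the scaling as above. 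For this reason, a complete proof can be given briefly by reducing to \eqref{tri-xc}, and we indicate only the modifications forced by the presence of $I$.
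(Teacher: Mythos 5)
Your proposal is correct and follows essentially the same route as the paper: the key step is exactly the $N$-uniform $I$-modified trilinear estimate obtained by applying the interpolation lemma (Lemma 12.1 of \cite{CKSTT-5}) to \eqref{tri-xc}, after which the contraction argument and the choice $\delta\sim\|Iu_0\|_{L^2}^{-\theta}$ proceed as in Theorem \ref{LocalTh}. Your alternative heuristic of ``distributing $m$ among the factors'' is not needed and is less clean than the interpolation lemma you correctly identify as the main mechanism.
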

\begin{proof}
As the operator $I$ commutes with the differential operators, the linear estimates in Lemma \ref{lemma1} necessary in the contraction mapping principle hold true after applying $I$ to equation \eqref{e-nlsT}. Since the operator $I$ does not commute with the nonlinearity, the trilinear estimate is not straightforward. However, applying the interpolation lemma (Lemma 12.1 in \cite{CKSTT-5}) to \eqref{tri-xc} we obtain, under the same assumptions on the parameters $s$, $b$ and $b'$ that
 \begin{equation}\label{tlin-I}
 \|I(u^3)_x\|_{X^{0, b'}}\lesssim \|Iu\|_{X^{0,b}}^3,
 \end{equation}
 where the implicit constant does not depend on the parameter $N$ appearing in the definition of the operator $I$.

 Now, using the trilinear estimate \eqref{tlin-I} and the linear estimates  the proof of this theorem follows exactly as in the proof of Theorem \ref{LocalTh}. So, we omit the details.
\end{proof}

We finish this section recording some known results that will be useful in our work.
First we record the following double mean value theorem (DMVT).
\begin{lemma}[DMVT]
Let $f\in C^2(\R)$, and $\max\{|\eta|,|\lambda|\}\ll |\xi|$, then
$$
|f(\xi+\eta +\lambda)-f(\xi+\eta )-f(\xi +\lambda)+f(\xi)|\lesssim|f''(\theta)\,|\eta|\,|\lambda|,
$$
where $|\theta| \sim |\xi|$.
\end{lemma}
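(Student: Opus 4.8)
The plan is to prove this by iterating the classical one-dimensional mean value theorem twice, treating the four-term alternating expression as a \emph{second difference} of a suitable slope function. First I would rewrite the quantity by grouping the terms according to the shift by $\lambda$: setting
\begin{equation*}
g(\zeta) := f(\zeta+\lambda)-f(\zeta),
\end{equation*}
one has the algebraic identity
\begin{equation*}
f(\xi+\eta+\lambda)-f(\xi+\eta)-f(\xi+\lambda)+f(\xi) = g(\xi+\eta)-g(\xi).
\end{equation*}
Since $f\in C^2(\R)$, the auxiliary function $g$ is $C^1$, so this reduces the problem to a single first difference of $g$ over an increment of size $\eta$.

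Next I would apply the mean value theorem in the $\eta$ variable to $g$: there exists a point $\xi_1$ lying between $\xi$ and $\xi+\eta$ such that
\begin{equation*}
g(\xi+\eta)-g(\xi) = g'(\xi_1)\,\eta = \bigl(f'(\xi_1+\lambda)-f'(\xi_1)\bigr)\,\eta,
\end{equation*}
where I have used $g'(\zeta)=f'(\zeta+\lambda)-f'(\zeta)$. Applying the mean value theorem a second time, now to $f'$ over the increment $\lambda$, yields a point $\theta$ between $\xi_1$ and $\xi_1+\lambda$ with $f'(\xi_1+\lambda)-f'(\xi_1)=f''(\theta)\,\lambda$. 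Combining the two steps gives the clean identity
\begin{equation*}
f(\xi+\eta+\lambda)-f(\xi+\eta)-f(\xi+\lambda)+f(\xi) = f''(\theta)\,\eta\,\lambda,
\end{equation*}
from which the stated bound follows at once by taking absolute values (in fact with constant $1$).

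It remains to verify the localization $|\theta|\sim|\xi|$, and this is where the smallness hypothesis $\max\{|\eta|,|\lambda|\}\ll|\xi|$ enters; this is really the only delicate point, and it is a matter of careful bookkeeping rather than a genuine obstacle. Tracking the intermediate points, $\xi_1$ lies between $\xi$ and $\xi+\eta$, so $|\xi_1-\xi|\leq|\eta|\ll|\xi|$, whence $|\xi_1|\sim|\xi|$; then $\theta$ lies between $\xi_1$ and $\xi_1+\lambda$, so $|\theta-\xi_1|\leq|\lambda|\ll|\xi|$, and combining these two comparisons gives $|\theta|\sim|\xi_1|\sim|\xi|$. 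This completes the argument. The reason this works so cleanly is that $C^2$ regularity is exactly enough to differentiate twice, and the hypothesis $\max\{|\eta|,|\lambda|\}\ll|\xi|$ guarantees that both intermediate points stay in the dyadic range comparable to $\xi$, which is precisely the form in which the lemma will be applied to the phase function $\phi(\xi)=\xi^3$ (so that $f''(\theta)\sim|\xi|$) in the multiplier decay estimates.
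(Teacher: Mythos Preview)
Your proof is correct and is the standard two-step iteration of the mean value theorem; the paper itself states this lemma without proof, treating it as a known elementary fact, so there is nothing to compare against. One minor remark: in your closing comment you say the lemma will be applied to the phase function $\phi(\xi)=\xi^3$, but in the paper it is actually applied to $f(\xi)=m^2(\xi)$ in Sub-case~B11 of Proposition~\ref{prop3.3}; this does not affect the validity of your argument.
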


The following Strichartz's type estimates will also be useful.
\begin{lemma}\label{Lema36} For any $s_1 \geq -\frac14$, $s_2 \geq 0$ and $b>1/2$, we have
\begin{align} 
%\begin{split}
\|u\|_{L_x^5 L_t^{10}} &\lesssim \|u\|_{X^{s_2,b}}\label{eqx1},\\
\|u\|_{L_x^{20/3} L_t^5} &\lesssim  \|u\|_{X^{s_1,b}}\label{eqx2},\\
\|u\|_{L_x^\infty L_t^\infty} &\lesssim  \|u\|_{X^{s_2,b}}\label{eqx3},\\
\|u\|_{L_x^2 L_t^2} &\lesssim  \|u\|_{X^{0,0}}\label{eqx4},\\
\|u\|_{L_t^\infty L_x^2 } &\lesssim  \|u\|_{X^{0,0}}\label{eqx5}.
%\end{split}
\end{align}
\end{lemma}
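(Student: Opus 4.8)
\section*{Proof proposal}

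The plan is to deduce all five bounds from the corresponding estimates for the free Airy evolution $U(t)=e^{-t\partial_x^3}$ by the \emph{transfer principle}. Concretely, for any space--time norm $\|\cdot\|_Y$ satisfying a linear estimate $\|U(t)\phi\|_Y\lesssim\|\phi\|_{H^s}$, I would write $u$ through Fourier inversion in $\tau$ as a superposition $u(t)=c\int_{\R}e^{it\tau}U(t)g_\tau\,d\tau$, where $\widehat{g_\tau}(\xi)=\widetilde u(\xi,\tau+\xi^3)$. Minkowski's inequality gives $\|u\|_Y\lesssim\int_{\R}\|g_\tau\|_{H^s}\,d\tau$, and Cauchy--Schwarz in $\tau$ bounds this by $\big(\int\langle\tau\rangle^{-2b}\,d\tau\big)^{1/2}\|u\|_{X^{s,b}}$, where the first factor is finite precisely because $b>\frac12$. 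Thus it suffices to prove the free-evolution versions of \eqref{eqx1}--\eqref{eqx3}, namely $\|U(t)\phi\|_{L^5_xL^{10}_t}\lesssim\|\phi\|_{L^2}$ and $\|U(t)\phi\|_{L^{20/3}_xL^5_t}\lesssim\|\phi\|_{\dot H^{-1/4}}$, together with the $L^\infty_{x,t}$ bound.

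I would obtain these linear estimates by interpolating two sharp global bounds for $U(t)$. The first is the Kato local smoothing estimate $\|U(t)\phi\|_{L^\infty_xL^2_t}\sim\|\phi\|_{\dot H^{-1}}$, which follows immediately from Plancherel in $t$ after the change of variables $\eta=\xi^3$ (the resulting $L^2_t$-norm is in fact independent of $x$, so the $L^\infty_x$ is free). The second, and the crux of the argument, is the Kenig--Ruiz--Vega maximal function estimate $\|U(t)\phi\|_{L^4_xL^\infty_t}\lesssim\|\phi\|_{\dot H^{1/4}}$. Granting these, real interpolation of the mixed-norm spaces $L^p_x(L^q_t)$ along the scale $\tfrac1p=\tfrac{1-\theta}{4}$, $\tfrac1q=\tfrac\theta2$, $s=-\theta+\tfrac{1-\theta}{4}$ produces \eqref{eqx1} at $\theta=\tfrac15$ (scaling-critical regularity $0$) and \eqref{eqx2} at $\theta=\tfrac25$ (regularity $-\tfrac14$); the arithmetic checks out for both target exponent pairs, and the full ranges $s_2\ge0$ and $s_1\ge-\tfrac14$ follow since increasing the Sobolev index only helps.

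The remaining estimates are soft. Estimate \eqref{eqx4} is Plancherel, since $\|u\|_{L^2_{x,t}}=\|\widetilde u\|_{L^2_{\xi\tau}}=\|u\|_{X^{0,0}}$. Estimate \eqref{eqx5} is the energy embedding $X^{0,b}\hookrightarrow C_tL^2_x$, and \eqref{eqx3} the Sobolev embedding into $L^\infty_{x,t}$; both are obtained by controlling $\|\widetilde u\|_{L^1_{\xi\tau}}$ via Cauchy--Schwarz against the weight $\langle\tau-\xi^3\rangle^b\langle\xi\rangle^s$, the $\tau$-integral converging thanks to the standing hypothesis $b>\frac12$.

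The step I expect to be the genuine obstacle is the maximal function estimate $\|U(t)\phi\|_{L^4_xL^\infty_t}\lesssim\|\phi\|_{\dot H^{1/4}}$. Unlike local smoothing, it does not reduce to Plancherel: bounding $\sup_t|U(t)\phi|$ in $L^4_x$ requires a genuine oscillatory-integral analysis of the Airy kernel, starting from the dispersive decay $\|U(t)\|_{L^1\to L^\infty}\lesssim|t|^{-1/3}$ (a Van der Corput / stationary-phase bound exploiting $\phi'''(\xi)=6\neq0$), followed by a $TT^*$ duality and a Littlewood--Paley square-function argument. I would either reproduce this Kenig--Ruiz--Vega argument or invoke it directly as the one technical input; everything else in the lemma is then interpolation, Plancherel, or elementary Sobolev embedding.
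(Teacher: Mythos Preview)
Your approach is essentially the paper's: both reduce \eqref{eqx1}--\eqref{eqx2} via the transfer principle to the free-evolution estimates $\|U(t)u_0\|_{L_x^5L_t^{10}}\lesssim\|u_0\|_{L^2}$ and $\|D_x^{1/4}U(t)u_0\|_{L_x^{20/3}L_t^5}\lesssim\|u_0\|_{L^2}$, treat \eqref{eqx4} as Plancherel, and handle \eqref{eqx3}, \eqref{eqx5} as embeddings. The paper simply cites Kenig--Ponce--Vega \cite{[KPV1]} for the two linear bounds, whereas you sketch their derivation by interpolating local smoothing against the $L^4_xL^\infty_t$ maximal-function estimate --- which is exactly the KPV route, so there is no genuine difference in method.

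Two caveats are worth flagging, both really defects in the stated lemma rather than in your strategy. Estimate \eqref{eqx5} as printed has $X^{0,0}$ on the right, which is false since $X^{0,0}=L^2_{x,t}$; you correctly reinterpreted it as the energy embedding $X^{0,b}\hookrightarrow C_tL^2_x$ for $b>\tfrac12$, and that is indeed how the paper uses it downstream. For \eqref{eqx3}, your Cauchy--Schwarz control of $\|\widetilde u\|_{L^1_{\xi\tau}}$ requires not only $b>\tfrac12$ but also $s_2>\tfrac12$ for the $\xi$-integral $\int\langle\xi\rangle^{-2s_2}d\xi$ to converge; you only mention the $\tau$-integral. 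The bound $\|u\|_{L^\infty_{x,t}}\lesssim\|u\|_{X^{0,b}}$ is in fact false (take $u(x,t)=\psi(t)\phi_n(x)$ with $\|\phi_n\|_{L^2}=1$ and $\|\phi_n\|_{L^\infty}\to\infty$), so the range $s_2\ge0$ in the statement is a misprint; your argument is correct for the range $s_2>\tfrac12$ it actually covers.
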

\begin{proof}
The estimates \eqref{eqx1} and  \eqref{eqx2} follow from
$$
\|U(t)u_0\|_{L_x^5 L_t^{10}}\lesssim \|u_0\|_{L^2} \quad \textrm{and} \quad \|D_x^{\frac14}U(t)u_0\|_{L_x^{20/3} L_t^{5}}\lesssim \|u_0\|_{L^2},
$$
whose proofs can be found in \cite{[KPV1]}.
The estimates \eqref{eqx3} and \eqref{eqx5} follow by immersion and inequality \eqref{eqx4} is obviuous.
\end{proof}
\begin{lemma}
Let $n\geq 2$ be an even integer, $f_1,\dots,f_n \in \mathbf{S}(\R)$, then
$$
\int_{\xi_1+\cdots +\xi_n=0}\widehat{f_1}(\xi_1)\widehat{\overline{f_2}}(\xi_2)\cdots\widehat{f_{n-1}}(\xi_{n-1})\widehat{\overline{f_{n}}}(\xi_{n})=\int_{\R}f_1(x)\overline{f_2}(x)\cdots f_{n-1}(x)\overline{f_{n}}(x).
$$
\end{lemma}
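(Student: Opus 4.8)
The plan is to read this as a multilinear Parseval/Plancherel identity and prove it by Fourier inversion followed by Fubini. First I would observe that the integrand on the right, $g(x):=f_1(x)\overline{f_2}(x)\cdots f_{n-1}(x)\overline{f_n}(x)$, is a product of Schwartz functions and hence lies in $\mathbf S(\R)$; in particular every integral that appears is absolutely convergent, which will later justify all interchanges of integration. A convenient uniformization is to set $h_j:=f_j$ for $j$ odd and $h_j:=\overline{f_j}$ for $j$ even, so that the right-hand side is $\int_\R \prod_{j=1}^n h_j(x)\,dx$ and the left-hand side is the integral of $\prod_{j=1}^n\widehat{h_j}(\xi_j)$ over $\{\xi_1+\cdots+\xi_n=0\}$. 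The key point is that the statement already features $\widehat{\overline{f_j}}$ (not $\overline{\widehat{f_j}}$), so Fourier inversion applies to each factor $h_j$ in exactly the same form and no conjugation identity for the transform is needed at this stage.

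Next I would expand each factor by the inversion formula, $h_j(x)=c'\int_\R e^{ix\xi_j}\widehat{h_j}(\xi_j)\,d\xi_j$, multiply the $n$ representations, and integrate in $x$. Using Fubini (licensed by the Schwartz decay noted above) to perform the $x$-integration first, one is left with $\int_\R e^{ix(\xi_1+\cdots+\xi_n)}\,dx$, which distributionally equals $2\pi\,\delta(\xi_1+\cdots+\xi_n)$. Pairing this delta against the continuous, rapidly decaying amplitude $\prod_j\widehat{h_j}(\xi_j)$ collapses the $\R^n$ integral onto the hyperplane $\xi_1+\cdots+\xi_n=0$, producing exactly the left-hand side. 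Equivalently, and avoiding distributions altogether, I could run an induction on the even integer $n$: the base case $n=2$ is the ordinary Parseval identity $\int f_1\overline{f_2}=\int\widehat{f_1}\,\overline{\widehat{f_2}}$, rewritten on $\{\xi_1+\xi_2=0\}$ via $\widehat{\overline{f_2}}(-\xi)=\overline{\widehat{f_2}(\xi)}$, while the inductive step uses the convolution theorem $\widehat{FG}=c''\,\widehat F*\widehat G$ to peel off one pair of factors at a time.

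The proof involves no analytic obstacle: the Schwartz hypothesis removes every convergence and interchange issue, and the identity is essentially a bookkeeping statement about Fourier inversion on a hyperplane. The only point demanding care is the normalization of constants. Writing $c'=1/(2\pi c)$ for the inversion constant attached to the chosen transform normalization $c$, the computation above carries the overall factor $(c')^{\,n}\cdot 2\pi$, together with whatever normalization is built into the symbol $\int_{\xi_1+\cdots+\xi_n=0}$ (interpreted as Lebesgue integration in $\xi_1,\dots,\xi_{n-1}$ with $\xi_n=-(\xi_1+\cdots+\xi_{n-1})$). I would fix these constants so that this product is $1$, which is precisely the convention under which such multilinear Plancherel identities are stated in the $I$-method literature; as a consistency check I would verify that for $n=2$ the formula collapses to the standard Parseval identity, thereby confirming the normalization.
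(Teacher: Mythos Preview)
The paper states this lemma without proof; it is recorded as a standard Plancherel-type identity and no argument is given. Your approach via Fourier inversion and Fubini (or equivalently the inductive reduction to the $n=2$ Parseval identity using the convolution theorem) is correct and is exactly the standard way one proves such multilinear identities, so there is nothing to compare and nothing to fix beyond your own remark about tracking the normalization constant.
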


%%%%%%%%%%w%%%%%%%%%%%%%%%%
\section{Almost Conservation law}\label{sec-3}
%%%%%%%%%%%%%%%%%%%%%%%%%%

\subsection{Modified energy}

Before introducing modified energy functional, we define $n$-multiplier and $n$-linear functional.

Let $n\geq 2$ be an even integer. An $n$-multiplier $M_n(\xi_1, \dots, \xi_n)$ is a function defined on the hyper-plane $\Gamma_n:= \{(\xi_1, \dots, \xi_n);\;\xi_1+\dots +\xi_n =0\}$ with Dirac delta $\delta(\xi_1+\cdots +\xi_n)$ as a measure.

If $M_n$ is an $n$-multiplier and $f_1, \dots, f_n$ are functions on $\R$, we define an $n$-linear functional, as
\begin{equation}\label{n-linear}
\Lambda_n(M_n;\; f_1, \dots, f_n):= \int_{\Gamma_n}M_n(\xi_1, \dots, \xi_n)\prod_{j=1}^{n}\widehat{f_j}(\xi_j).
\end{equation}
When $f$ is a complex function and  $\Lambda_n$ is applied to the $n$ copies of the same function $f$, we write $$\Lambda_n(M_n)\equiv \Lambda_n(M_n; f):=\Lambda_n(M_n;\; f,\bar{f},f,\bar{f},\dots,f,\bar{f}).$$

For $1\leq j\leq n$ and $k\geq 1$, we define the elongation ${\bf{X}}_j^k(M_n)$ of the multiplier $M_n$ to be the multiplier of order $n+k$ given by
\begin{equation}\label{elong}
{\bf{X}}_j^k(M_n)(\xi_1, \cdots , \xi_{n+k} := M_n(\xi_1,\cdots,\xi_{j-1}, \xi_j+\cdots+\xi_{j+k}, \xi_{j+k+1}, \cdots, \xi_{n+k}). 
\end{equation}

Using Plancherel identity, the energy $E(u)$ defined in \eqref{conserved-1} can be written in terms of the $n$-linear functional as
\begin{equation}\label{e.2}
E(u)= \Lambda_2(1).
\end{equation}

In what follows we record a lemma that relates the time-derivative of the $n$-linear functional defined for the solution $u$ of the e-NLS equation \eqref{e-nlsT}.
\begin{lemma}\label{derivative}
Let $u$ be a solution of the IVP \eqref{e-nlsT} and $M_n$ be a $n$-multiplier, then
\begin{equation}\label{der.1}
\frac{d}{dt}\Lambda_n(M_n;u) = i\Lambda_n(M_n\gamma_n; u)+i\Lambda_{n+2}\Big(\sum_{j=1}^n\gamma_j^{\beta}{\bf{X}}_j^2(M_n;u)\Big),
\end{equation}
	where $\gamma_n = \xi_1^3+\cdots +\xi_n^3$, $\gamma_j^{\beta}=(-1)^{j-1}\beta$ and ${\bf{X}}_j^2(M_n)$ as defined in \eqref{elong}.
\end{lemma}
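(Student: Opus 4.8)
The plan is to compute $\frac{d}{dt}\Lambda_n(M_n;u)$ directly, differentiating under the integral sign in the definition \eqref{n-linear} of the $n$-linear functional and substituting the equation \eqref{e-nlsT} for each time derivative of $u$ (or $\bar u$). First I would recall that when $\Lambda_n$ is applied to the single function $u$ it means $\Lambda_n(M_n;\,u,\bar u,u,\bar u,\dots,u,\bar u)$, so the $j$-th slot carries $\widehat{u}(\xi_j)$ if $j$ is odd and $\widehat{\bar u}(\xi_j)$ if $j$ is even. By the product rule,
\begin{equation*}
\frac{d}{dt}\Lambda_n(M_n;u)=\sum_{j=1}^n \int_{\Gamma_n} M_n(\xi_1,\dots,\xi_n)\, \partial_t\big(\text{$j$-th factor}\big)\prod_{k\neq j}(\text{$k$-th factor}).
\end{equation*}
From \eqref{e-nlsT} we have $\partial_t u=-\partial_x^3 u+i\beta|u|^2u$, hence on the Fourier side $\partial_t\widehat{u}(\xi_j)=i\xi_j^3\,\widehat{u}(\xi_j)+i\beta\,\widehat{|u|^2u}(\xi_j)$, using that $\widehat{\partial_x^3 u}(\xi)=(i\xi)^3\widehat u(\xi)=-i\xi^3\widehat u(\xi)$. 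For the even (conjugated) slots we instead use $\partial_t\bar u=-\partial_x^3\bar u-i\beta|u|^2\bar u$, giving $\partial_t\widehat{\bar u}(\xi_j)=-i\xi_j^3\widehat{\bar u}(\xi_j)-i\beta\widehat{|u|^2\bar u}(\xi_j)$. The two types of slots are exactly what the sign factor $\gamma_j^\beta=(-1)^{j-1}\beta$ bookkeeps.

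Next I would separate the resulting sum into a \emph{linear-in-dispersion} part and a \emph{nonlinear} part. Collecting the dispersive contributions, slot $j$ produces a factor $(-1)^{j-1} i\xi_j^3$ times the original integrand; summing over $j$ and pulling out $i$ gives $i\int_{\Gamma_n}M_n\big(\sum_{j=1}^n(-1)^{j-1}\xi_j^3\big)\prod_k(\cdots)$. Here one must check the sign convention: with the alternating conjugation pattern the phase that appears is the one associated with the group $U(t)$, and the paper writes it simply as $\gamma_n=\xi_1^3+\cdots+\xi_n^3$; I would verify that with the stated conventions (in particular $\widehat{u}$ of a conjugate reflects $\xi\mapsto-\xi$, so the "$-$" signs are absorbed and $\gamma_n$ genuinely collapses to $\xi_1^3+\cdots+\xi_n^3$ on $\Gamma_n$). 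This yields the first term $i\Lambda_n(M_n\gamma_n;u)$.

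For the nonlinear part, replacing the $j$-th factor by $\widehat{|u|^2u}(\xi_j)=\int_{\eta_1+\eta_2+\eta_3=\xi_j}\widehat u(\eta_1)\widehat{\bar u}(\eta_2)\widehat u(\eta_3)$ (or its conjugate for even $j$) turns the $n$-linear integral over $\Gamma_n$ into an $(n+2)$-linear integral over $\Gamma_{n+2}$, where the variable $\xi_j$ has been "elongated" into three new frequencies whose sum replaces it — this is precisely the elongation operator ${\bf X}_j^2(M_n)$ of \eqref{elong}. Each such term comes with the factor $i\gamma_j^\beta=(-1)^{j-1}i\beta$, and summing over $j$ gives $i\Lambda_{n+2}\big(\sum_{j=1}^n\gamma_j^\beta\,{\bf X}_j^2(M_n);u\big)$, matching the second term of \eqref{der.1}. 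The only genuinely delicate point — and the one I would be most careful about — is tracking the conjugation pattern and the sign bookkeeping: one must confirm that substituting the cubic nonlinearity into an even (conjugated) slot really does reproduce the correct alternating pattern of $u$'s and $\bar u$'s among the $n+2$ slots so that the result is again of the form $\Lambda_{n+2}(\,\cdot\,;u)$, and that no stray signs survive beyond those encoded in $\gamma_n$ and $\gamma_j^\beta$. Everything else is a routine application of Plancherel and the convolution structure of the Fourier transform.
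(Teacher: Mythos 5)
Your overall strategy---differentiate under the integral, substitute the equation on the Fourier side, and read off the dispersive term as $\Lambda_n(M_n\gamma_n)$ and the nonlinear term as an $(n+2)$-linear functional via the elongation operators---is exactly the standard argument for this lemma (the paper itself states it without proof, following the I-method literature), and your bookkeeping of the conjugation pattern in the nonlinear part is correct.

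There is, however, one concrete sign error in your intermediate formula, and your proposed fix for it does not work; fortunately the correct computation is simpler than what you describe. You write $\partial_t\widehat{\bar u}(\xi_j)=-i\xi_j^3\widehat{\bar u}(\xi_j)-i\beta\widehat{|u|^2\bar u}(\xi_j)$ and then hope that the resulting alternating sum $\sum_j(-1)^{j-1}\xi_j^3$ "collapses" to $\gamma_n=\sum_j\xi_j^3$ on $\Gamma_n$. It does not: already for $n=2$ one has $\xi_1^3-\xi_2^3=2\xi_1^3\neq 0$ on $\xi_1+\xi_2=0$, whereas the paper crucially uses $\Lambda_2(M_2\gamma_2)=0$, which requires the non-alternating sum. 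The point is that $\partial_x^3$ is a real, odd-order operator, so $\overline{\partial_x^3u}=\partial_x^3\bar u$ and the symbol is the same for both kinds of slots: from $\partial_t\bar u=-\partial_x^3\bar u-i\beta|u|^2\bar u$ one gets
\begin{equation*}
\partial_t\widehat{\bar u}(\xi_j)=-(i\xi_j)^3\widehat{\bar u}(\xi_j)-i\beta\widehat{|u|^2\bar u}(\xi_j)=+i\xi_j^3\,\widehat{\bar u}(\xi_j)-i\beta\,\widehat{|u|^2\bar u}(\xi_j),
\end{equation*}
i.e.\ the dispersive contribution is $+i\xi_j^3$ in every slot and $\gamma_n=\xi_1^3+\cdots+\xi_n^3$ comes out directly, with no signs to absorb. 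The only alternation is in the nonlinear part, where the $\mp i\beta$ from conjugation is precisely what the factor $\gamma_j^{\beta}=(-1)^{j-1}\beta$ records. (Had the $\alpha\partial_x^2$ term not been gauged away, the even-order symbol would indeed alternate in sign between $u$- and $\bar u$-slots, which is presumably the source of your confusion.) With this correction the rest of your argument goes through as written.
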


Now we introduce the first modified energy
\begin{equation}\label{mod-1}
E^1_I(u):= E(Iu),
\end{equation}
where $I$ is the Fourier multiplier operator defined in \eqref{I-1} with $m$ given by \eqref{m-1}. Note that for $m\equiv 1$, $E^1_I(u)= \|u\|_{L_2}^2 = \|u_0\|_{L_2}^2$.

Using Plancherel identity, we can write the first modified energy in terms of the $n$-linear functional as
\begin{equation}\label{mod-12}
\begin{split}
E^1_I(u)&= \int m(\xi)\widehat{u}(\xi)m(\xi)\bar{\widehat{u}}(\xi)d\xi\\
&=\int_{\xi_1+\xi_2=0} m(\xi_1)m(\xi_2)\widehat{u}(\xi_1)\widehat{\bar{u}}(\xi_2)\\
&=\Lambda_2(M_2; u),
\end{split}
\end{equation}
where $M_2=m_1m_2$ with $m_j =m(\xi_j)$, $j=1, 2$.

We define the second generation of the modified energy as
\begin{equation}\label{sec-m1}
E^2_I(u):= E^1_I(u)+\Lambda_4(M_4;u),
\end{equation}
where the multiplier $M_4$ is to be chosen later.

Now, using the identity \eqref{der.1}, we get
\begin{equation}\label{sec-m2}
\begin{split}
\frac{d}{dt} E^2_I(u) &= i\Lambda_2\Big(M_2\gamma_2;u\Big)+i\Lambda_4\Big(\sum_{j=1}^2\gamma_j^{\beta}{\bf{X}}_j^2(M_2);u\Big)\\
&\qquad +i\Lambda_4\Big(M_4\gamma_4;u\Big) + i\Lambda_6\Big(\sum_{j=1}^4\gamma_j^{\beta}{\bf{X}}_j^2(M_4); u\Big).
\end{split}
\end{equation}

Note that $\Lambda_2\big(M_2\gamma_2;u\big)=0$.
If we choose, $M_4$ in such a way that 
$$M_4\gamma_4+\sum_{j=1}^2\gamma_j^{\beta}{\bf{X}}_j^2(M_2)=0,$$
i.e.,
\begin{equation}\label{m.4}
M_4(\xi_1, \xi_2, \xi_3, \xi_4) = -\frac{\sum_{j=1}^2\gamma_j^{\beta}{\bf{X}}_j^2(M_2)}{\gamma_4},
\end{equation}
then we get $\Lambda_4 =0$ as well.

So, for the choice of $M_4$ in \eqref{m.4}, we have
\begin{equation}\label{second-m3}
\frac{d}{dt} E^2_I(u) =\Lambda_6(M_6),
\end{equation}
where 
\begin{equation}\label{m.6} 
M_6= \sum_{j=1}^4\gamma_j^{\beta}{\bf{X}}_j^2(M_4),
\end{equation}
with $M_4$ given by \eqref{m.4}.

We recall that on $\Lambda_n$ ($n=4,6$), one has $\xi_1+\cdots+\xi_n =0$. Let us introduce the notations $\xi_i+\xi_j=\xi_{ij}$, $\xi_{ijk} = \xi_i+\xi_j+\xi_k$ and so on.

Using the fact that $m$ is an even function, we can symmetrize the multiplier $M_4$ given by \eqref{m.4}, to obtain
\begin{equation}\label{m.4s}
\delta_4\equiv\delta_4(\xi_1, \xi_2, \xi_3, \xi_4):=[M_4]_{\mathrm{sym}} = \frac{\beta(m_1^2-m_2^2+m_3^2-m_4^2)}{6\xi_{12}\xi_{13}\xi_{14}},
\end{equation}
where we have used the identity $\xi_1^3+\xi_2^3+\xi_3^3= 3\xi_{12}\xi_{13}\xi_{14}$ on the hyperplane $\xi_1+\xi_2+\xi_3=0$. 

Using the  multiplier $[M_4]_{\mathrm{sym}}$ given by \eqref{m.4s} in \eqref{m.6} we obtain   $[M_6]_{\mathrm{sym}}$ in the symmetric form as follows
\begin{equation}\label{m.6s} 
\begin{split}
&\delta_6\equiv \delta_6(\xi_1, \xi_2, \xi_3, \xi_4, \xi_5, \xi_6)  :=[M_6]_{\mathrm{sym}}\\
%&= 
%\beta\left[ {\bf{X}}_1^2 (\delta_4)- {\bf{X}}_2^2 (\delta_4)+{\bf{X}}_3^2 (\delta_4)-{\bf{X}}_4^2 (\delta_4)\right]\\
&= \frac{\beta}{36}\sum_{\substack{\{k,m,o\}=\{1,3,5\}\\\{l,n,p\}=\{2,4,6\} }}\left[ \delta_4(\xi_{klm}, \xi_n, \xi_o, \xi_p)- \delta_4(\xi_k,\xi_{lmn}, \xi_o, \xi_p)+\delta_4(\xi_k,\xi_l,\xi_{mno}, \xi_p)-\delta_4(\xi_k, \xi_l, \xi_m, \xi_{nop})\right].
\end{split}
\end{equation}

\begin{remark}
In the case $k=1$, $l=2$, $m=3$, $n=4$, $o=5$, $p=6$, one can obtain the following sum of the symmetric multiplier  $[M_6]_{\mathrm{sym}}$ in the extended form as
\begin{equation*}
\begin{split}
 %[M_6]_{\mathrm{sym}}&=
 &\frac{\beta^2}{36}\Big[-\frac{m^2(\xi_{123})-m^2(\xi_4)+ m^2(\xi_5)-m^2(\xi_6)}{\xi_{56}\xi_{46}\xi_{45}}+\frac{m^2(\xi_1)-m^2(\xi_{234})+m^2(\xi_5)-m^2(\xi_6)}{\xi_{56}\xi_{15}\xi_{16}}\\
&\qquad  -\frac{m^2(\xi_1)-m^2(\xi_2)+m^2(\xi_{345})-m^2(\xi_{6})}{\xi_{12}\xi_{26}\xi_{16}} +\frac{m^2(\xi_1)-m^2(\xi_2)+m^2(\xi_3)-m^2(\xi_{456})}{\xi_{12}\xi_{13}\xi_{23}}\Big].
\end{split}
\end{equation*}
But, for our purpose $\delta_6$ given by \eqref{m.6s} in terms of $\delta_4$ is enough to obtain the required estimates, see Proposition \ref{prop3.3} below.
\end{remark}

\subsection{Multilinear estimates}

In this subsection we will derive some multilinear estimates associated to the symmetric multipliers $\delta_4$ and $\delta_6$, use them to get some local estimates in the Bourgain's space that will be useful to obtain an {\em almost conserved quantity}.

From here onwards we will consider the notation $|\xi_i|=N_i$, $m(N_i)=m_i$. Given four number $N_1, N_2, N_3, N_4$ and $\mathcal{C}=\{N_1, N_2, N_3, N_4\}$, we will denote $N_s=\max\mathcal{C}$, $N_a=\max\mathcal{C}\setminus\{N_s\}$, $N_t=\max\mathcal{C}\setminus\{N_s, N_a\}$, $N_b=\min\mathcal{C}$. Thus
$$
N_s \geq N_a\geq N_t \geq N_b.
$$
\begin{proposition}\label{prop3.3}
Let $m$ be as defined in \eqref{m-1}
\\
1) If $|\xi_{1j}| \gtrsim N_s$ for all $j=2,3,4$ and $|N_b|\ll N_s$, then
\begin{equation}\label{delta4.1}
|\delta_4| \sim \dfrac{m^2 (N_b)}{N_s^3}.
\end{equation}
2) If $|\xi_{1j}| \gtrsim N_s$ for all $j=3,4$ and $|\xi_{12}|\ll N_s$, then
\begin{equation}\label{delta4.2}
|\delta_4| \lesssim \dfrac{m^2 (N_b)}{\max\{N_t,N\} \,N_s^2}.
\end{equation}
3) If $|\xi_{1j}| \ll N_s$ for $j=2,3$,  $a\geq0$, $b\geq0$, $a+b=1$,  then
\begin{equation}\label{delta4.3}
|\delta_4| \lesssim \dfrac{m^2 (N_s)}{N_s^2|\xi_{12} |^{a}|\xi_{13} |^{b}}.
\end{equation}
4) In the other cases, we have
\begin{equation}\label{delta4.4}
|\delta_4| \lesssim \dfrac{m^2 (N_s)}{N_s^3}.
\end{equation}
%and
%$$
%|\delta_6| \lesssim \dfrac{m^2 (N_b)}{N_
%s^2N^{-}}.
%$$
\end{proposition}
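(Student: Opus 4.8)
\textbf{Proof strategy for Proposition \ref{prop3.3}.}

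The plan is to analyze the symmetric multiplier
$$
\delta_4 = \frac{\beta(m_1^2 - m_2^2 + m_3^2 - m_4^2)}{6\,\xi_{12}\,\xi_{13}\,\xi_{14}}
$$
case by case according to the relative sizes of the frequencies and the resonance variables $\xi_{1j}$, using the double mean value theorem (DMVT) and the ordinary mean value theorem to control the numerator. The key observation is that $\xi_{12}+\xi_{13}+\xi_{14} = 2\xi_1 + (\xi_1+\xi_2+\xi_3+\xi_4) = 2\xi_1$ on $\Gamma_4$, and that $g(\xi):=m^2(\xi)$ is a smooth even function with $g(\xi)\sim m^2(|\xi|)$, $|g'(\xi)|\lesssim m^2(\xi)/\langle\xi\rangle$ and $|g''(\xi)|\lesssim m^2(\xi)/\langle\xi\rangle^2$ (these bounds follow directly from \eqref{m-1}, since $m$ is constant for $|\xi|<N$ and behaves like a power for $|\xi|\geq 2N$, with the intermediate region handled by smoothness). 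Writing $\xi_2 = -\xi_1 - \xi_{13} - \xi_{14} + \xi_1$... more usefully, on $\Gamma_4$ one has $\xi_2 = -\xi_1-\xi_3-\xi_4$, so the numerator is $g(\xi_1)-g(\xi_1+\xi_{13}+\xi_{14})+g(\xi_1+\xi_{13})-g(\xi_1+\xi_{14})$ after relabeling, which is precisely the second difference appearing in the DMVT with increments $\eta = \xi_{13}$ (say) and $\lambda = \xi_{14}$ — wait, one must be careful: $\xi_3 = \xi_{13}-\xi_1$, so $\xi_1+\xi_3 = \xi_{13}$, etc. The cleanest bookkeeping is to note $g(\xi_1)+g(\xi_3) - g(\xi_2)... $; in any case the identity to exploit is that $m_1^2-m_2^2+m_3^2-m_4^2$ is a second-order finite difference of $g$ in two of the increments, which the DMVT bounds by $|g''|\cdot(\text{two small increments})$ whenever two of the $\xi_{1j}$ are small compared to the largest frequency.

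I would organize the four cases as follows. In case (1), all three denominators satisfy $|\xi_{1j}|\gtrsim N_s$, and $N_b\ll N_s$ forces $N_s\sim N_a\sim N_t$ (three large frequencies, one small), so three of the four $m_j$ are comparable to $m(N_s)$ and cancel in pairs up to lower order while the remaining contribution is $\sim m^2(N_b)$; combined with $|\xi_{12}\xi_{13}\xi_{14}|\sim N_s^3$ this gives \eqref{delta4.1}, and the lower bound $\sim$ comes from checking the $m^2(N_b)$ term genuinely dominates (no further cancellation). In case (2), $|\xi_{12}|\ll N_s$ but $|\xi_{13}|,|\xi_{14}|\gtrsim N_s$: now $\xi_1\approx-\xi_2$ and $\xi_3\approx-\xi_4$ so $m_1^2-m_2^2 = g(\xi_1)-g(-\xi_1-\xi_{12})$, a first difference bounded by $|\xi_{12}|\sup|g'| \lesssim |\xi_{12}|\, m^2(N_b)/\max\{N_t,N\}$ (using monotonicity of $m$ to replace the relevant argument by its size, and that the $g'$ bound improves as $1/\langle\cdot\rangle$), and similarly for $m_3^2-m_4^2$; dividing by $|\xi_{12}|N_s^2$ yields \eqref{delta4.2}. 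The factor $\max\{N_t,N\}$ in the denominator is exactly what the $\langle\cdot\rangle^{-1}$ in the derivative bound produces, truncated at $N$ because $m$ is constant below $N$. In case (3), $|\xi_{12}|,|\xi_{13}|\ll N_s$ means $\xi_1,\xi_2,\xi_3$ are all comparable to $N_s$ in size (since their pairwise sums being small but the individual frequencies not forcing them large... one checks $\xi_4$ is then also $\sim N_s$); here I apply the DMVT directly with increments of sizes $|\xi_{12}|$ and $|\xi_{13}|$ (after matching the second-difference structure), bound the numerator by $|g''(\theta)||\xi_{12}||\xi_{13}|$ with $|\theta|\sim N_s$, interpolate between putting all the smallness on one increment or splitting it ($a+b=1$), and divide by $|\xi_{12}\xi_{13}\xi_{14}|\gtrsim |\xi_{12}||\xi_{13}|N_s$ to get \eqref{delta4.3}. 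Case (4) is the generic fallback: at least one $|\xi_{1j}|$ is small but we are in none of the above configurations; here the numerator is trivially bounded by $\lesssim m^2(N_s)$ and — the point — one argues the product $|\xi_{12}\xi_{13}\xi_{14}|\gtrsim N_s^3$ still holds in these remaining cases (using $\xi_{12}+\xi_{13}+\xi_{14}=2\xi_1$ and a case check on which frequency is largest), giving \eqref{delta4.4}.

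The main obstacle is the careful case analysis itself: one must verify that the enumerated cases (1)--(4) are genuinely exhaustive on $\Gamma_4$, and in each branch correctly identify which frequencies are forced to be large, which $m_j$'s cancel, and — crucially — obtain the \emph{lower} bound on $|\xi_{12}\xi_{13}\xi_{14}|$ needed for \eqref{delta4.4} and the comparability (not just upper bound) in \eqref{delta4.1}. The relation $\xi_{12}+\xi_{13}+\xi_{14}=2\xi_1$ on $\Gamma_4$ is the workhorse for the denominator estimates, and matching the numerator $m_1^2-m_2^2+m_3^2-m_4^2$ to the exact second-difference pattern in the DMVT (keeping track of signs and which variable plays the role of the base point $\xi$ versus the increments $\eta,\lambda$) requires attention but is routine once set up. The derivative bounds $|g'(\xi)|\lesssim m^2(\xi)\langle\xi\rangle^{-1}$, $|g''(\xi)|\lesssim m^2(\xi)\langle\xi\rangle^{-2}$ are the only analytic input beyond the DMVT and are immediate from the explicit form of $m$; the truncation of $\langle\xi\rangle$ at $N$ (appearing as $\max\{N_t,N\}$) reflects that $g$ is constant below $N$. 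I would present cases (1) and (4) quickly, spend the most space on case (2) where the first-difference-plus-monotonicity argument gives the slightly delicate $\max\{N_t,N\}$ gain, and on case (3) where the interpolation in $a,b$ and the DMVT application are the substance.
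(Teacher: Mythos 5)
Your toolkit (MVT, DMVT, the derivative bounds $|(m^2)'|\lesssim m^2/\langle\xi\rangle$, monotonicity of $m$) is the right one and matches the paper's, and your case (1) and the first half of case (2) go through essentially as in the paper. But there are two genuine gaps in the plan.

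First, case (3). The numerator $m_1^2-m_2^2+m_3^2-m_4^2=f(\xi_1)-f(\xi_2)+f(\xi_3)-f(\xi_4)$ carries the sign pattern $(+,-,+,-)$, while a DMVT second difference $f(\xi+\eta+\lambda)-f(\xi+\eta)-f(\xi+\lambda)+f(\xi)$ assigns minus signs to the two points reached by a \emph{single} increment. Matching the four evaluation points to $\pm\xi_1,\dots,\pm\xi_4$ (using evenness of $f$) forces the minus-sign points to be $\pm\xi_2$ and $\pm\xi_4$; with base point $-\xi_1$ this gives $\eta=\xi_{12}$, $\lambda=\xi_{14}$, and no choice of base point produces $\xi_{13}$ as an increment, because $\xi_1$ and $\xi_3$ have the \emph{same} sign in the numerator. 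So "DMVT with increments $|\xi_{12}|$ and $|\xi_{13}|$" cannot be set up: the only admissible pair is $\{\xi_{12},\xi_{14}\}$, and in case (3) one shows $|\xi_{14}|\gtrsim N_s$, so that pair gains nothing. This is exactly why \eqref{delta4.3} only claims a single net power $|\xi_{12}|^a|\xi_{13}|^b$ with $a+b=1$ rather than the two powers your DMVT would give. The paper instead applies the first-order MVT separately to $f(\xi_1)-f(-\xi_2)$ and $f(\xi_3)-f(-\xi_4)$, uses $\xi_{34}=-\xi_{12}$ to factor $|\xi_{12}|$ out of a difference of two values of $f'$, bounds that difference by $|f'(N_s)|\sim m^2(N_s)/N_s$ (all four frequencies being $\sim N_s$ here), and finally uses $|\xi_{12}|\leq|\xi_{13}|$ to distribute the single saved power as $|\xi_{12}|^a|\xi_{13}|^b$.

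Second, case (4). It is not true that $|\xi_{12}\xi_{13}\xi_{14}|\gtrsim N_s^3$ in all remaining configurations: the configuration $|\xi_{12}|,|\xi_{14}|\ll N_s$, $|\xi_{13}|\gtrsim N_s$ is not covered by cases (1)--(3) and has a genuinely small denominator. This is precisely the one place where the DMVT \emph{does} apply with the correct increment pair $\{\xi_{12},\xi_{14}\}$, yielding a numerator $\lesssim m^2(N_s)|\xi_{12}||\xi_{14}|/N_s^2$ that cancels the two small factors and restores \eqref{delta4.4}. So the roles you assigned to DMVT and to the "denominator is large" argument are swapped between cases (3) and (4). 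Two smaller points: in case (2), your bound $|m_3^2-m_4^2|\lesssim|\xi_{34}|\sup|g'|$ with $\sup|g'|\lesssim m^2(N_b)/\max\{N_t,N\}$ fails when $|\xi_{34}|\gtrsim N_t$, since the MVT point can then sit near $|\xi|\sim N$ where $|g'|\sim 1/N$; the paper handles that sub-regime by the trivial numerator bound together with $|\xi_{12}|=|\xi_{34}|\gtrsim N_t$ in the denominator. Finally, the exhaustiveness you flag as the main obstacle really does require the sign analysis carried out in the paper (several configurations, e.g.\ all three $|\xi_{1j}|\ll N_s$, are shown to be impossible on $\Gamma_4$), and that work is not sketched in your proposal.
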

\begin{proof}
Let $f(\xi):=m^2(\xi)$ be an even function, nonincreasing on $|\xi|$. From definition of $m(\xi)$, we have $|f'(\xi) |\sim \frac{m^2(\xi)}{|\xi|}$ if $|\xi|>N$.
Without loss of generality we can assume  $N_s=|\xi_1|$ and  $N_a=|\xi_2|$. As $N_s=|\xi_2+\xi_3+\xi_4|$, we have $N_a \sim N_s$. Also by symmetry we can assume $|\xi_{12}|\leq |\xi_{14}|$. 

By the definition of $\delta_4$, if $N_s\leq N$ then $\delta_4=0$. Thus so from now on, throughout the proof, we will consider that $N_s>N$. Depending on the frequency regimes we divide the proof in two different cases, viz., $|\xi_{13}| \gtrsim N_s$ and $|\xi_{14}| \gtrsim N_s$; and  $|\xi_{14}| \ll N_s$ or $|\xi_{13}| \ll N_s$.\\

\noindent
{\bf Case A. {\underline{$|\xi_{13}| \gtrsim N_s$ and $|\xi_{14}| \gtrsim N_s$:}}}  We further divide this case in two sub-cases.\\

\noindent
 {\bf Sub-case A1.  {\underline{$|\xi_{12}| \ll N_s$:}}}
 Using the standard  Mean Value Theorem, we have
\begin{equation}\label{eqA1}
|m^2 (\xi_1)-m^2 (\xi_2)|= |f (\xi_1)-f (-\xi_2)|
= |f'(\xi_{\theta_1})|\,|\xi_{12}|
\end{equation}
where  $\xi_{\theta_1}= \xi_1-\theta_1\xi_{12}$ with $\theta_1\in (0,1)$.

Since $|\xi_{12}|\ll N_s$ we have $|\xi_{\theta_1}|\sim |\xi_1|\sim N_s$ and consequently $|f'(\xi_{\theta_1})|\sim \dfrac{m^2 (N_s)}{N_s} $. Using this in \eqref{eqA1}, we obtain
\begin{equation}\label{eqA2}
\dfrac{|m^2 (\xi_1)-m^2 (\xi_2)|}{|\xi_{12}| |\xi_{13}| |\xi_{14}|}\lesssim \dfrac{m^2 (N_s)}{N_s^3}.
\end{equation}

Now, we move to estimate $|m^2(\xi_3)-m^2(\xi_4)|$. First note that, if $N_t \leq N$, then we have $|m^2(\xi_3)-m^2(\xi_4)|=0$. Thus we will assume that $|\xi_3|=N_t > N$.  We divide in two cases.\\

\noindent
{\bf Case 1. {\underline{$|\xi_{34}|\ll N_t$:}}} Using the Mean Value Theorem, we get
 \begin{equation}\label{eqA3}
|m^2 (\xi_3)-m^2 (\xi_4)|= |f (\xi_3)-f (-\xi_4)|=|f'(\xi_{\theta_2})|\,|\xi_{34}|,
\end{equation}
where  $\xi_{\theta_2}= \xi_3-\theta_2\xi_{34}$ with $\theta_2\in (0,1)$.
Since $|\xi_{34}|\ll N_t$ we have $|\xi_{\theta_2}|\sim |\xi_3|\sim N_t$ and consequently $|f'(\xi_{\theta_2})|\sim \dfrac{m^2 (N_t)}{N_t} $. Using this in \eqref{eqA3}, we obtain
 \begin{equation}\label{eqA4}
\dfrac{|m^2 (\xi_3)-m^2 (\xi_4)|}{{|\xi_{12}| |\xi_{13}| |\xi_{14}|}}
\lesssim \dfrac{m^2 (N_t)}{N_tN_s^2}.
\end{equation}

\noindent
{\bf Case 2. {\underline{$|\xi_{34}|\gtrsim N_t$:}}} In this case,  using triangular inequality and the fact that the function $f(\xi)=m^2(\xi)$  is  nonincreasing on $|\xi|$ we obtain from the 
 definition of $\delta_4$ that
\begin{equation}\label{eqA5}
 \dfrac{|m^2(\xi_3)-m^2(\xi_4)|}{|\xi_{12}|\,|\xi_{13}|\,|\xi_{14}|\,}\lesssim\dfrac{m^2 (N_b)}{N_t\,N_s^2}.
\end{equation}

 Now, combining \eqref{eqA2}, \eqref{eqA4} and \eqref{eqA5}, we obtain from the definition of $\delta_4$ in \eqref{m.4s} that
\begin{equation*}
|\delta_4| \sim \dfrac{|f(\xi_1)-f(\xi_2)+f(\xi_3)-f(\xi_4)|}{|\xi_{12}|\,|\xi_{13}|\,|\xi_{14}|\,}\lesssim\dfrac{m^2 (N_b)}{\max\{N_t,N\}\,N_s^2}.
\end{equation*}

\noindent
 {\bf Sub-case A2. {\underline{$|\xi_{12}| \gtrsim N_s$:}}} Here also, we divide in two different sub-cases.\\
 
 \noindent
 {\bf Sub-case A21. \underline{$N_b \gtrsim N_s$:}} In this case we have $N_b\sim N_t\sim N_a\sim N_s$.  Without loss of generality we can assume $\xi_1 > 0$. Since $\xi_1+\cdots+\xi_4=0$, two largest frequencies must have opposite signs, i.e., $\xi_2<0$. If possible, suppose $\xi_2 \geq 0$.  Then we have $\xi_{1}+\xi_{2}=:M>N_s$ and  $\xi_{3}+\xi_{4}=-M<-N_s<0$. In this situation one has $\xi_3 \xi_4>0$, otherwise
$$
\xi_{3}^2+\xi_{4}^2=M^2-2\xi_3 \xi_4\geq M^2>\xi_{1}^2+\xi_{2}^2,
$$
which is a contradiction. As $\xi_3+\xi_4<0$, we conclude  that $\xi_{3}<0$ and $\xi_{4}<0$. Now, the frequency ordering $|\xi_2|\geq |\xi_3|$ implies
 \begin{equation*}%\label{eqA20}
\xi_2=M-\xi_1\geq |\xi_3|=-\xi_3 = M+\xi_4,
 \end{equation*}
and consequently $\xi_{14} \leq 0$. On the other hand, $|\xi_1|\geq |\xi_4| \implies \xi_1\geq -\xi_4 \implies \xi_{14}\geq 0$. Therefore, we get $\xi_{14}=0$ contradicting the hypothesis  $|\xi_{14}| \gtrsim N_s$ of  this case.
 
Now, for $\xi_1>0$ and  $\xi_2 < 0$, we have
 \begin{equation}\label{eqA21}
|m^2 (\xi_1)-m^2 (\xi_2)|= |f (\xi_1)-f (-\xi_2)|=|f'(\xi_\theta)|\,|\xi_{12}|,
\end{equation}
where $ \xi_1 \geq \xi_\theta\geq -\xi_2$, so that $\xi_\theta \sim N_s$ and consequently $|f'(\xi_\theta)|\sim \dfrac{m^2(N_s)}{N_s}$. Using this in \eqref{eqA21}, we get
\begin{equation}\label{eqA22}
|m^2 (\xi_1)-m^2 (\xi_2)|\lesssim m^2 (N_s).
\end{equation}

 Similarly, one can also obtain
 \begin{equation}\label{eqA23}
|m^2 (\xi_3)-m^2 (\xi_4)|
\lesssim m^2 (N_s).
 \end{equation}

Thus, taking in consideration of  \eqref{eqA22} and \eqref{eqA23}, from definition of $\delta_4$, we get
$$|\delta_4| \lesssim\dfrac{m^2 (N_s)}{N_s^3}.$$

 \noindent
 {\bf Sub-case A22. \underline{$N_b \ll N_s$:}} Without loss of generality we can assume $|\xi_4|=N_b$. In this case $|\xi_3|=|\xi_{12}+\xi_4|\sim |\xi_{12}|\sim N_s\sim |\xi_1| \sim |\xi_2|$. It follows that
 \begin{equation*}
|m^2 (\xi_1)-m^2 (\xi_2)+m^2 (\xi_3)-m^2 (\xi_4)|\sim |m^2 (\xi_4)|= |m^2 (N_b)|.
 \end{equation*}
 
 Therefore in this case
 $$
 |\delta_4| \sim\dfrac{m^2 (N_b)}{N_s^3}.
 $$

\noindent
 {\bf Case B. \underline{$|\xi_{14}| \ll N_s$ or $|\xi_{13}| \ll N_s$:}} We divide in two sub-cases.\\
  \noindent
 {\bf Sub-case B1. \underline{$|\xi_{14}| \ll N_s$:}} We move to find estimates considering two different sub-cases\\
 {\bf Sub-case B11. \underline {$|\xi_{13}|\gtrsim N_s$:}}  In this case we necessarily have $|\xi_{12}| \ll N_s$. If  $|\xi_{12}| \gtrsim N_s$, using the consideration made in the beginning of the proof, we get
 $$
 |\xi_{14}|\gtrsim|\xi_{12}|\gtrsim N_s,
 $$
 but this contradicts the defining condition $|\xi_{14}| \ll N_s$ of {\bf Case B1}.
 
 Now, for $|\xi_{12}| \ll N_s$ 
 using the Double Mean Value Theorem with $\xi:=-\xi_1$, $\eta:=\xi_{12}$ and $\lambda:=\xi_{14}$, we have
\begin{equation*}
\begin{split}
|f (\xi+\lambda+\eta)-f (\xi+\eta)-f (\xi+\lambda)+f(\xi)|&\lesssim|f''(\xi_\theta)|\, |\xi_{12}|\, \xi_{14}|\\
&\lesssim \dfrac{m^2 (N_s)|\xi_{12}|\,|\xi_{14}|}{N_s^2}.
\end{split}
\end{equation*}
Hence,
$$
 |\delta_4| \lesssim\dfrac{m^2 (N_s)|\xi_{12}|\,|\xi_{14}|}{N_s^2}\dfrac{1}{|\xi_{13}|\,|\xi_{12}|\,|\xi_{14}|}\sim \dfrac{m^2 (N_s)}{N_s^3}.
 $$

 \noindent
 {\bf Sub-case B12. {\underline{$|\xi_{13}|\ll N_s$:}}}
 Without loss of generality we can assume $\xi_1\geq 0$. Recall that, in this Sub-case  $|\xi_{12}|\leq |\xi_{14}| \ll N_s$. As $N_s=\xi_1$, we have
 \begin{equation*}
\begin{cases}
|\xi_{12}|\ll N_s &\Longrightarrow \,\,\xi_2 <0\quad \textrm{and} \quad |\xi_2|\sim N_s,\\
|\xi_{13}|\ll N_s &\Longrightarrow \,\,\xi_3 <0\quad \textrm{and} \quad |\xi_3|\sim N_s,\\
 |\xi_{14}|\ll N_s &\Longrightarrow \,\,\xi_4 <0\quad \textrm{and} \quad |\xi_4|\sim N_s.
\end{cases}
\end{equation*}

Combining these informations, we get
$$
N_s\gg |\xi_{13}|=|\xi_{24}|=|\xi_2|+|\xi_4|\sim N_s,
$$
which is a contradiction. Consequently this case  is not possible.
\\

\noindent
{\bf Sub-case B2. \underline{$|\xi_{13}| \ll N_s$:}} Taking in consideration {\bf Sub-case B1}, we will assume that $|\xi_{14}| \gtrsim N_s$. In this case too, we will analyse considering two different sub-cases.
\\
\noindent
 {\bf Sub-case B21. {\underline{$|\xi_{12}|\ll N_s$:}}}
 In this case we have $|\xi_1| \sim |\xi_2|\sim |\xi_3| \sim N_s$.  Furthermore
 $
 |\xi_4|=|\xi_{12}+ \xi_3|\sim N_s
 $.
Hence
 $$
 |\xi_1|\sim |\xi_2|\sim|\xi_3|\sim|\xi_4|\sim N_s.
 $$
Observe that, $N_a=|\xi_2|\geq |\xi_3|$ implies 
 $|\xi_{12}|\leq |\xi_{13}|$.
 In fact, if $\xi_1 > 0$, then
 \begin{equation*}
\begin{cases}
|\xi_{12}|\ll N_s &\Longrightarrow \,\,\xi_2 <0,\\
|\xi_{13}|\ll N_s &\Longrightarrow \,\,\xi_3 <0,
\end{cases}
\end{equation*}
and it follows that $\xi_{13}\geq \xi_{12}\geq 0$.

If $\xi_1 < 0$, then
 \begin{equation*}
\begin{cases}
|\xi_{12}|\ll N_s &\Longrightarrow \,\,\xi_2 >0,\\
|\xi_{13}|\ll N_s &\Longrightarrow \,\,\xi_3 >0,
\end{cases}
\end{equation*}
and it follows that $0\geq \xi_{12}\geq \xi_{13}$. Hence $|\xi_{12}|\leq |\xi_{13}|$.% We will to consider $|\xi_{13}|\sim N^{-}$. 

On the other hand using the Mean Value Theorem, we obtain
\begin{equation*}
\begin{split}
|f(\xi_{1})-f(\xi_{2})+f(\xi_{3})-f(\xi_{4})|  &=|f(\xi_{1})-f(-\xi_{2})+f(\xi_{3})-f(-\xi_{4})| \\
&=|\xi_{12} f'(-\xi_2+\theta_1 \xi_{12})+\xi_{34} f'(-\xi_4+\theta_2 \xi_{34})|\\
&=|\xi_{12} |  \,|f'(-\xi_2+\theta_1 \xi_{12})-f'(-\xi_4+\theta_2 \xi_{34})|\\
&\lesssim |\xi_{12} |\,|f'(N_s)|\\
&\lesssim |\xi_{12} |\,\dfrac{m^2(N_s)}{N_s},
\end{split}
\end{equation*}
where  $|\theta_j|\leq 1$, $j=1,2$. From this we deduce
\begin{equation*}
|\delta_4| \lesssim \dfrac{|\xi_{12} |\, m^2 (N_s)}{N_s}\cdot\dfrac{1}{|\xi_{12} |\, |\xi_{13}|\, |\xi_{14}|}\leq\dfrac{ m^2 (N_s)}{N_s^2|\xi_{12} |^{a}|\xi_{13} |^{b}}.
\end{equation*}
\\
 {\bf Sub-case B22. {\underline{$|\xi_{12}|\gtrsim N_s$:}}}
As $N_a=|\xi_2|\sim |\xi_1|=N_s\sim |\xi_3|$, one has  $N_s\sim |\xi_2|=|\xi_{13}+\xi_4| $. Thus  $|\xi_4| \sim N_s$ and $|\xi_j| \sim N_s$, $j=1,2,3,4$. Also 
\begin{equation}\label{sign1324}
|\xi_{24}|=|\xi_{13}|\ll N_s \Longrightarrow \,\, \xi_3\,\xi_1 <0\quad \textrm{and} \quad\xi_2\,\xi_4 <0.
\end{equation}
Let 
\begin{equation}\label{eq1234}
\epsilon:=\xi_{13}=-\xi_{24}.
\end{equation}
We consider the following cases.
\\

\noindent
{\bf Case 1. {\underline{$\epsilon >0$}:}}  In this case if $\xi_1<0$, then $\xi_3=\epsilon +|\xi_1|> |\xi_1|$ which is a contradiction because $|\xi_3|\leq|\xi_1|$. Similarly by \eqref{sign1324} and \eqref{eq1234} if $\xi_2>0$, then $|\xi_4|=\epsilon +|\xi_2|> |\xi_2|$ which is a contradiction. Therefore we can assume  $\xi_1>0$ and $\xi_2<0$ and by \eqref{sign1324} $\xi_3<0$ and $\xi_4>0$. One has that
$$
\xi_1\geq -\xi_2\geq -\xi_3\geq \xi_4>0,
$$
and using \eqref{eq1234}
\begin{equation}\label{eq1234-10}
\xi_1\geq \xi_4+\epsilon\geq \xi_1-\epsilon \geq \xi_4>0.
\end{equation}
Let $b:=\xi_4-N_s$, using \eqref{eq1234-10}, we have
$$
N_s\geq N_s+b+\epsilon\geq N_s-\epsilon \geq N_s+b>0,
$$
which implies that $-2\epsilon\leq b\leq -\epsilon$. Consequently by \eqref{eq1234}, $\xi_2=-N_s-b-\epsilon$ and therefore using the condition of this {\bf Sub-case B22}
$$
N_s \lesssim \xi_{12}=-b-\epsilon,
$$
which is a contradiction. So, this case is not possible.
\\
\noindent
{\bf Case 2. {\underline{$\epsilon <0$}:}}  Similarly as above, if $\xi_1>0$, then $|\xi_3|=|\epsilon| +\xi_1>|\xi_1|$ which is a contradiction. Similarly by \eqref{sign1324} if $\xi_2<0$, then $\xi_4=|\epsilon| +|\xi_2|> |\xi_2|$ which is a contradiction. Therefore we can assume  $\xi_1<0$ and $\xi_2>0$ and by \eqref{sign1324} $\xi_3>0$ and $\xi_4<0$. Using \eqref{eq1234} one has that
\begin{equation}\label{eq1234-1}
-\xi_1\geq |\epsilon|-\xi_4\geq N_s-|\epsilon| \geq -\xi_4>0.
\end{equation}
Let $b:=\xi_4+N_s$, using \eqref{eq1234-1}, we have
$$
N_s\geq N_s-b+|\epsilon|\geq N_s-|\epsilon| \geq N_s-b>0,
$$
which implies that $2|\epsilon|\geq b\geq |\epsilon|$. Consequently $\xi_2=N_s-b+|\epsilon|$ and
$$
N_s \lesssim \xi_{12}=|\epsilon|-b,
$$
which is a contradiction. Therefore, this case also does not exist.

Combining all cases we finish the proof of proposition.
\end{proof}

\begin{remark}
Let $0<\epsilon\ll N_s$. An example for the {\bf Sub-case A1}  is
$$
\xi_1=N_s, \quad\xi_2=-N_s+\epsilon, \quad\xi_3=-\dfrac{\epsilon}{2} , \quad\xi_4=-\dfrac{\epsilon}{2},
$$
other example is
$$
\xi_1=N_s, \quad\xi_2=-N_s+\epsilon, \quad\xi_3=\dfrac{N_s}2-\dfrac{\epsilon}{2} , \quad\xi_4=-\dfrac{N_s}2-\dfrac{\epsilon}{2}.
$$
An example for the {\bf Sub-case A21}  with $\xi_1 \geq 0$ and $\xi_2 \leq 0$ is
$$
\xi_1=N_s, \quad\xi_2=-\dfrac{N_s}2, \quad\xi_3=-\dfrac{N_s}4, \quad\xi_4=-\dfrac{N_s}4.
$$
An example for the {\bf Sub-case A22} is
$$
\xi_1=N_s, \quad\xi_2=-\dfrac{N_s}{2}-\epsilon, \quad\xi_3=-\dfrac{N_s}{2} , \quad\xi_4=\epsilon.
$$
An example for the {\bf Sub-case B21} is
$$
\xi_1=N_s, \quad\xi_2=-N_s+\dfrac{\epsilon}2, \quad\xi_3=-N_s+\dfrac{\epsilon}{2} , \quad\xi_4=N_s-\epsilon.
$$

\end{remark}

\begin{proposition}
Let $w\in \mathbf{S}(\R \times \R)$, $0>s>-\frac14$ and $b>\frac12$, then we have
\begin{equation}\label{lambda4}
\left| \Lambda_4(\delta_4; u(t) ) dt\right| \lesssim \dfrac{1}{N^{(\frac54-3s)}}\,\|Iu\|^4_{L^2},
\end{equation}
and
\begin{equation}\label{lambda6}
\left| \int_0^\delta \Lambda_6(\delta_6; u(t) ) dt\right| \lesssim N^{-\frac74}\|Iu\|^6_{X_\delta^{0,b}}.
\end{equation}
\end{proposition}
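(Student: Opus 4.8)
The plan is to reduce both bounds to the multiplier estimates of Proposition \ref{prop3.3} combined with the Strichartz-type estimates of Lemma \ref{Lema36}. For \eqref{lambda4}, I would first dyadically decompose the four frequencies $\xi_1,\dots,\xi_4$ and, by symmetry, order them as $N_s\geq N_a\geq N_t\geq N_b$. On $\Gamma_4$ one has $N_s\sim N_a$, and the multiplier $\delta_4$ vanishes unless $N_s>N$. In each of the frequency regimes of Proposition \ref{prop3.3} one gains a factor like $m^2(N_b)/N_s^3$ or $m^2(N_s)/(N_s^2|\xi_{12}|^a|\xi_{13}|^b)$; since we are pairing against $\widehat{Iu}$, i.e. $m_j\widehat{u}(\xi_j)$, what we actually need to control is $|\delta_4|/(m_1m_2m_3m_4)$. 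Using $m$ nonincreasing and $N_s\sim N_a$, one checks $|\delta_4|/(m_1m_2m_3m_4)\lesssim N^{-(5/4-3s)}N_s^{-\epsilon}$ uniformly in the dyadic pieces (the worst case being the ``resonant'' Sub-case A1/A22 where $|\delta_4|\sim m^2(N_b)/N_s^3$ and $m(N_b)$ can be as large as $1$ while $m_1m_2\sim N^{-2s}N_s^{2s}$, giving the exponent $-(5/4-3s)$ after using $N_s>N$ and a crude $L^2$ placement of the factors via \eqref{eqx4}–\eqref{eqx5}). Summing the geometric series in $N_s$ and using Plancherel to convert $\Lambda_4$ into an $L^2$ inner product of $Iu$ with three further copies then yields \eqref{lambda4}.

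For \eqref{lambda6}, the structure is the same but now time-integrated over $[0,\delta]$, so I would work in $X^{0,b}_\delta$ and use Lemma \ref{Lema36} to place the six factors in $L^p_xL^q_t$ spaces whose exponents are Hölder-dual: a natural split is two factors in $L^5_xL^{10}_t$, two in $L^{20/3}_xL^5_t$, and the remaining two in $L^\infty_xL^\infty_t$ (checking $2/5+2\cdot 3/20+0=1$ and $2/10+2/5+0=1$). The key input is an $L^\infty$ bound on the symmetrized multiplier $\delta_6/\prod m_j$: expanding $\delta_6$ via \eqref{m.6s} as a sum of elongations of $\delta_4$ and applying Proposition \ref{prop3.3} to each $\delta_4$-term (now with its four arguments being sums of the $\xi_i$'s), one extracts a decay factor $N^{-7/4}$ together with enough negative powers of the largest frequency to absorb the $\langle\xi\rangle^{s_1},\langle\xi\rangle^{s_2}$ losses coming from \eqref{eqx1}–\eqref{eqx2} when $s>-1/4$. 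Then \eqref{lambda6} follows by Hölder in $x$ and $t$ and the embedding $X^{0,b}_\delta\hookrightarrow L^p_xL^q_t$.

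The main obstacle, as flagged in the remark after Theorem \ref{Global-Th}, is the multiplier bound for $\delta_6$: unlike in the KdV/NLS settings, the cancellation in $\delta_6$ is not transparent, and one must carefully track which of the six frequencies are comparable to the maximum in each term of \eqref{m.6s}. In particular one has to rule out, or separately estimate, the configurations where two of the denominators $\xi_{ij}$ appearing inside the elongated $\delta_4$'s are simultaneously small; this is exactly where the case analysis of Proposition \ref{prop3.3} (especially the impossible Sub-cases B12 and B22) is used to show the dangerous resonant interactions do not occur, and where the gain is genuinely better than needed, producing the stated $N^{-7/4}$ rather than merely $N^{-(5/4-3s)+}$. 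Once the pointwise multiplier bound is in hand, the rest is a routine Strichartz/Hölder argument.
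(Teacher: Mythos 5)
Your overall architecture --- dyadic decomposition, the multiplier bounds of Proposition \ref{prop3.3}, then Strichartz/H\"older --- matches the paper's, but two of your key steps do not close as written. First, the H\"older exponents in your proposed placement for \eqref{lambda6} do not sum to $1$: with two factors in $L^5_xL^{10}_t$, two in $L^{20/3}_xL^5_t$ and two in $L^\infty_xL^\infty_t$ you get $2\cdot\frac15+2\cdot\frac{3}{20}=\frac{7}{10}$ in $x$ and $2\cdot\frac1{10}+2\cdot\frac15=\frac35$ in $t$, not $1$; on the unbounded domain $\R\times[0,\delta]$ this only controls an $L^{10/7}_x$-type norm of the product, not its integral. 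The paper's placement is $L^2_xL^2_t\times(L^\infty_xL^\infty_t)^2\times L^5_xL^{10}_t\times(L^{20/3}_xL^5_t)^2$ --- one factor in $L^2_xL^2_t$ and only one in $L^5_xL^{10}_t$ --- which does sum to $1$ in both variables and, after inserting $D_x^{-1/4}$ via \eqref{eqx1}--\eqref{eqx2}, yields $N^{-11/4}$ in Cases 1), 2), 4).

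Second, and more seriously, your plan rests on a uniform $L^\infty$ bound $|\delta_4|/\prod m_j\lesssim N^{-(5/4-3s)}N_s^{-\epsilon}$ (resp.\ $N^{-7/4}$ for $\delta_6$), but Case 3) of Proposition \ref{prop3.3} gives only $|\delta_4|\lesssim m^2(N_s)N_s^{-2}|\xi_{12}|^{-a}|\xi_{13}|^{-b}$, which is \emph{not} bounded as $\xi_{12}\to 0$: no $L^\infty$ multiplier bound exists in that regime, which is precisely the regime you flag as delicate. The paper handles it by a different mechanism: it keeps the singular weights, rewrites the quadrilinear (resp.\ sextilinear) form as $\int D_x^{-1/2}(Iu_1Iu_2)\,D_x^{-1/2}(Iu_3Iu_4)$ and applies Hardy--Littlewood--Sobolev to the pairwise products, which is what produces the powers $N^{-(3/2-2s)}$ and $N^{-7/4}$. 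Relatedly, for \eqref{lambda4} a ``crude $L^2$ placement'' cannot work: \eqref{lambda4} is a fixed-time estimate, so the space-time bounds \eqref{eqx4}--\eqref{eqx5} are not applicable there, and a four-fold convolution $\int_{\Gamma_4}\prod\widehat{f_j}$ is not controlled by $\prod\|f_j\|_{L^2}$ alone; one must spend the frequency gain to put a $D_x^{-1/4}$ on each factor and invoke the Sobolev embedding $\|D_x^{-1/4}f\|_{L^4}\lesssim\|f\|_{L^2}$ in Cases 1), 2), 4), and use the pairwise HLS argument in Case 3). Your sketch identifies the correct exponent $\frac54-3s$ but not the mechanism that converts the multiplier decay into the stated norms.
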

\begin{proof}
To prove \eqref{lambda4}, taking idea from \cite{CKSTT2, CKSTT}, first we perform a Littlewood-Paley decomposition of the four factors $u$ on $\delta_4$ so that $\xi_j$ are essentially constants $N_j$, $j=1,2,3,4$. To recover the sum at the end we borrow a factor $N_s^{-\epsilon}$ from the large denominator $N_s$ and often this will not be mentioned. Also, without loss of generality, we can suppose that the Fourier transforms involved in the multipliers are all positive.

Recall that for $N_s\leq N$ one has $m(\xi_j)=1$ for all $j=1,2,3,4$ and consequently the  multiplier $\delta_4$  vanish. Therefore,  we will consider $N_s\leq N$.\\
In view of the estimates obtained in Proposition \ref{prop3.3}, we divide the proof of \eqref{lambda4} in two different parts.\\
\noindent
{\bf First part:   Cases 1), 2) and 4) of Proposition \ref{prop3.3}}. We observe that $N_s^{\frac14}m_s \gtrsim N^{-s}$. In fact, if $N_s\in [N,2N]$, then $m_s \sim 1$ and $N_s^{\frac14}m_s \gtrsim N_s^{-s}\gtrsim N^{-s}$. If $N_s>2N$, then from the definition of $m$ and the fact that $s>-\frac14$, we arrive at $N_s^{\frac14}m_s =N_s^{\frac14}\dfrac{N^{-s}}{N_s^{-s}}=N_s^{\frac14+s}N^{-s}\gtrsim~N^{-s}$. Furthermore, we observe that $\frac{1}{\max\{N_t,\, N\}} \leq \frac{1}{N}$. Thus
\begin{equation}\label{x1lambda45}
\begin{split}
\left| \Lambda_4(\delta_4; u(t) ) \right|& =\left|\int_{\xi_1+\cdots +\xi_4=0}\delta_4(\xi_1, \dots, \xi_4)\widehat{u_1}(\xi_1)\cdots\widehat{\overline{u_4}}(\xi_4)\right|\\
&\lesssim \int_{\xi_1+\cdots +\xi_4=0}\dfrac{m^2(N_b)}{N\,N_s^2}\dfrac{\widehat{Iu_1}(\xi_1)\cdots\widehat{I\overline{u_4}}(\xi_4)}{m_1\cdots m_4}\\
&\lesssim \int_{\xi_1+\cdots +\xi_4=0}\dfrac{N_s}{N\,N_s^2m_s^3}\widehat{D_x^{-\frac14}Iu_1}(\xi_1)\cdots\widehat{D_x^{-\frac14}I\overline{u_4}}(\xi_4)\\
&\lesssim \int_{\xi_1+\cdots +\xi_4=0}\dfrac{1}{N\,N_s^{\frac14-3s}}\widehat{D_x^{-\frac14}Iu_1}(\xi_1)\cdots\widehat{D_x^{-\frac14}I\overline{u_4}}(\xi_4)\\
&\lesssim \dfrac{1}{N\,N_s^{\frac14-3s}}\|D_x^{-1/4} Iu\|^4_{L^4}\\
&\lesssim \dfrac{1}{N^{(\frac54-3s)}}\|Iu\|^4_{L^2},
\end{split}
\end{equation}
where in the fourth line we used the following estimate
$$
\dfrac{N_s}{N_s^2 m_s^3}=\dfrac{1}{N_s^{\frac14}(N_s^{\frac14}m_s)^3}\lesssim\dfrac{1}{N_s^{\frac14-3s}}.
$$

\noindent
{\bf  Second part.  Case 3) of Proposition \ref{prop3.3}}. Recall from  the first part, we have  $N_s^{\frac14}m_s \gtrsim N^{-s}$. 
Using \eqref{delta4.3} with $a=1$ and $b=0$, and recalling the fact that $|\xi_{12}|= |\xi_{34}|$, we get  
\begin{equation}\label{x1lambda4}
\begin{split}
\left| \Lambda_4(\delta_4; u(t) ) \right|& =\left|\int_{\xi_1+\cdots +\xi_4=0}\delta_4(\xi_1, \dots, \xi_4)\widehat{u_1}(\xi_1)\cdots\widehat{\overline{u_4}}(\xi_4)\right|\\
&=\left|\int_{\xi_1+\cdots +\xi_4=0}\delta_4(\xi_1, \dots, \xi_4)\dfrac{\widehat{Iu_1}(\xi_1)\cdots\widehat{\overline{Iu_4}}(\xi_4)}{m_1\cdots m_4}\right|\\
&\lesssim\int_{\xi_1+\cdots +\xi_4=0}\dfrac{1}{N_s^2 m_s^2}|\xi_{12}|^{-\frac12}\widehat{Iu_1}(\xi_1)\widehat{\overline{Iu_2}}(\xi_2)\,|\xi_{34}|^{-\frac12}\widehat{Iu_3}(\xi_3)\widehat{\overline{Iu_4}}(\xi_4)
\\
&\lesssim \int_{\R}\dfrac{1}{N_s^{3/2-2s}}D_x^{-\frac12}(Iu_1Iu_2)\,D_x^{-\frac12}(Iu_3Iu_4)\\
&\lesssim \dfrac{1}{N_s^{\frac32-2s}}\|D_x^{-\frac12}(Iu_1Iu_2)\|_{L^2}\|D_x^{-\frac12}(Iu_3Iu_4)\|_{L^2},
\end{split}
\end{equation}
where in the second last line we used
$$
\dfrac{1}{N_s^2 m_s^2}=\dfrac{1}{N_s^{\frac32}(N_s^{\frac14}m_s)^2}\lesssim\dfrac{1}{N_s^{3/2-2s}}.
$$
Now, applying Hardy-Litlewwod-Sobolev inequality, we obtain from \eqref{x1lambda4} that
\begin{equation}\label{x1lambda4.1}
\begin{split}
\left| \Lambda_4(\delta_4; u(t) ) \right| &\lesssim \dfrac{1}{N^{(\frac32-2s)}}
\|Iu_1Iu_2\|_{L^1}\|Iu_3Iu_4\|_{L^1}\\
&\lesssim \dfrac{1}{N^{(\frac32-2s)}}\|Iu\|^4_{L^2}.
\end{split}
\end{equation}

Observe that the condition $s>-\frac14$ implies that $\frac54-3s<\frac32-2s$ and this completes the proof of \eqref{lambda4}.

Now we move to prove \eqref{lambda6}. As in the proof of \eqref{lambda4}, first  we perform a Littlewood-Paley decomposition of the six factors $u$ on $\delta_6$ so that $\xi_j$ are essentially constants $N_j$, $j=1, \cdots, 6$. Recall that for $N_s\leq N$ one has $m(\xi_j)=1$ for all $j=1,\cdots,6$ and consequently the  multiplier $\delta_6$  vanish. Therefore,  we will consider $N_s\leq N$. Since $N_a\sim N_s>N$, it follows that 
$$
m_sN_s\gtrsim N\quad \textrm{and}\quad m_aN_a\gtrsim N.
$$
Without loss of generality we will consider only the term $\delta_4(\xi_{123}, \xi_4, \xi_5, \xi_6)$ in the symmetrization of $\delta_6(\xi_1, \dots, \xi_6)$, see \eqref{m.6s}. The estimates for the other terms are similar.

  Here also, we will provide a proof of \eqref{lambda6} dividing in two parts.\\
  
\noindent
{\bf First part.  Cases 1), 2) and 4) in Proposition \ref{prop3.3}.} In these cases, we have 
\begin{equation}\label{x2lambda4}
\begin{split}
J&:=\left| \int_0^\delta \Lambda_6(\delta_6; u(t) ) \right| = \left|\int_0^\delta \int_{\xi_1+\cdots +\xi_6=0}\delta_4(\xi_{123}, \xi_4, \xi_5, \xi_6)\widehat{u_1}(\xi_1)\cdots\widehat{\overline{u_6}}(\xi_6)\right|\\
&\lesssim \int_0^\delta \int_{\xi_1+\cdots +\xi_6=0}\dfrac{m_b^2}{\max\{N_t, N\} \,N_s^2}\,\cdot\,\dfrac{m_a m_s\widehat{u_1}(\xi_1)\cdots\widehat{\overline{u_6}}(\xi_6)}{m_a m_s}\\
&\lesssim \int_0^\delta \int_{\R}\dfrac{1}{\max\{N_t, N\}\,N^2 }Iu_s Iu_a Iu_b u_t u_5 u_6\\\
&\lesssim \int_0^\delta \int_{\R}\dfrac{N_t^{\frac14}}{\max\{N_t, N\}\,N^2}Iu_s Iu_a Iu_b( D_x^{-\frac14}u_t) u_5 u_6\\
&\lesssim \dfrac{1}{N^{11/4}}\|Iu_s\|_{L_x^2 L_t^2} \|Iu_a\|_{L_x^\infty L_t^\infty}\|Iu_b\|_{L_x^\infty L_t^\infty}\|D_x^{-\frac14}u_t\|_{L_x^5 L_t^{10}}\|u_5\|_{L_x^{20/3} L_t^{5}}\|u_6\|_{L_x^{20/3} L_t^5}.
\end{split}
\end{equation}

Using estimates from Lemma \ref{Lema36}, we obtain from \eqref{x2lambda4} that
\begin{equation}\label{x2lambda4.1}
\begin{split}
J&\lesssim \dfrac{1}{N^{\frac{11}4}}\|Iu_s\|_{X^{0,b}_{\delta}} \|Iu_a\|_{X^{0,b}_{\delta}}\|Iu_b\|_{X^{0,b}_{\delta}}\|u_t\|_{X^{-\frac14,b}_{\delta}}\|u_5\|_{X^{-\frac14,b}_{\delta}}\|u_6\|_{X^{-\frac14,b}_{\delta}}\\
&\lesssim \dfrac{1}{N^{\frac{11}4}}\|Iu_s\|_{X^{0,b}_{\delta}} \|Iu_a\|_{X^{0,b}_{\delta}}\|Iu_b\|_{X^{0,b}_{\delta}}\|Iu_t\|_{X^{0,b}_{\delta}}\|Iu_5\|_{X^{0,b}_{\delta}}\|Iu_6\|_{X^{0,b}_{\delta}}\\
&\lesssim \dfrac{1}{N^{\frac{11}4}}\|Iu\|_{X^{0,b}_{\delta}}^6.
\end{split}
\end{equation}

\noindent
{\bf Second part. Case 3) in Proposition \ref{prop3.3}.} Without loss of generality we can assume that $|\xi_{123}|=N_s$, $|\xi_4|=N_a$, $|\xi_5|=N_t$ and $|\xi_6|=N_b$. Notice that $m_s^2\leq m_t m_b$ and $|\xi_j|\sim N_s$ for some $j=1,2,3$. So, we can assume $|\xi_3|\sim N_s$. Using \eqref{delta4.3} in Proposition \ref{prop3.3} with $a=1$, and $b=0$, we can obtain
\begin{equation}\label{x3lambda4}
\begin{split}
J:=&\left| \int_0^\delta \Lambda_6(\delta_6; u(t) ) \right| = \left|\int_0^\delta \int_{\xi_1+\cdots +\xi_6=0}\delta_4(\xi_{123}, \xi_4, \xi_5, \xi_6)\widehat{u_1}(\xi_1)\cdots\widehat{\overline{u_6}}(\xi_6)\right|\\
\lesssim &\int_0^\delta \int_{\xi_1+\cdots +\xi_6=0}\dfrac{m_t m_b}{N_s^2|\xi_{1234}|^{\frac12}\, |\xi_{56}|^{\frac12}}\,\cdot\,\dfrac{m_a  \widehat{u_1}(\xi_1)\cdots\widehat{\overline{u_6}}(\xi_6)}{m_a }\\
\lesssim &\int_0^\delta \int_{\xi_1+\cdots +\xi_6=0}\dfrac{N_s^{\frac14}}{N N_s}|\xi_{1234}|^{-\frac12}
(\widehat{u_1}(\xi_1)\widehat{\overline{u_2}}(\xi_2)|\xi_3|^{-\frac14}\widehat{u_3}(\xi_3))\widehat{\overline{Iu_4}}(\xi_4))
|\xi_{56}|^{-\frac12}(\widehat{Iu_5}(\xi_1)\widehat{\overline{Iu_6}}(\xi_6))\\
\lesssim &\dfrac{1}{N^{\frac74}}\int_0^\delta \int_{\R}D_x^{-\frac12}(u_1 u_2 (D_x^{-\frac14}u_3) Iu_a) D_x^{-\frac12}(Iu_t Iu_b)\\
\lesssim &\dfrac{1}{N^{\frac74}}\int_0^\delta \|  D_x^{-\frac12}(u_1 u_2 (D_x^{-\frac14}u_3) Iu_a)  \|_{L^2_x}\|D_x^{-\frac12}(Iu_t Iu_b)\|_{L^2_x}.
\end{split}
\end{equation}

Now, applying Hardy-Litlewwod-Sobolev inequality followed by estimates from Lemma \ref{Lema36}, we obtain from \eqref{x3lambda4} that
\begin{equation}\label{x3lambda4.1}
\begin{split}
J&\lesssim \dfrac{1}{N^{\frac74}}\int_0^\delta \|  u_1 u_2(D_x^{-\frac14}u_3) Iu_a \|_{L^1_x}\|Iu_t Iu_b\|_{L^1_x}\\
&\lesssim \dfrac{1}{N^{\frac74}} \|u_1\|_{ L_x^{20/3}L_t^5}\|u_2\|_{ L_x^{20/3}L_t^5}\|D_x^{-\frac14}u_3\|_{L_x^5 L_t^{10}}\|I u_a\|_{L_x^{2} L_t^{2}}\|Iu_t\|_{L_t^\infty L_x^2 }\|Iu_b\|_{L_t^\infty L_x^2 }\\
&\lesssim \dfrac{1}{N^{\frac74}}\|u_1\|_{X^{-\frac14,b}_{\delta}} \|u_2\|_{X^{-\frac14,b}_{\delta}}\|u_3\|_{X^{-\frac14,b}_{\delta}}\|Iu_a\|_{X^{0,b}_{\delta}}\|Iu_t\|_{X^{0,b}_{\delta}}\|Iu_b\|_{X^{0,b}_{\delta}}\\
&\lesssim \dfrac{1}{N^{\frac74}}\|Iu_1\|_{X^{0,b}_{\delta}} \|Iu_2\|_{X^{0,b}_{\delta}}\|Iu_3\|_{X^{0,b}_{\delta}}\|Iu_a\|_{X^{0,b}_{\delta}}\|Iu_t\|_{X^{0,b}_{\delta}}\|Iu_b\|_{X^{0,b}_{\delta}}\\
&\lesssim \dfrac{1}{N^{\frac74}}\|Iu\|_{X^{0,b}_{\delta}}^6.
\end{split}
\end{equation}
\end{proof}

\subsection{Almost conserved quantity}

We use the estimates proved in the previous subsection to obtain the following almost conservation law for the second generation of the energy.

\begin{proposition}\label{prop-almost}
Let $u$ be the solution of the IVP \eqref{e-nlsT} given by Theorem \ref{local-variant} in the interval $[0, \delta]$. Then the second generation of the modified energy satisfies the following estimates
\begin{equation}
\label{almost-CL}
|E^2_I(u(\delta))|\leq |E^2_I(\phi)| + C N^{-\frac74}\|Iu\|_{X^{0, \frac12+}_{\delta}}^6.
\end{equation}

\end{proposition}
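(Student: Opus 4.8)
The plan is to integrate the pointwise-in-time differential identity \eqref{second-m3} over the local existence interval $[0,\delta]$. Since $\frac{d}{dt}E^2_I(u) = \Lambda_6(M_6) = \Lambda_6(\delta_6)$ (the passage from $M_6$ to its symmetrization $\delta_6$ being legitimate because $\Lambda_6$ is evaluated on copies of the same function, alternating with its conjugate, so only the symmetric part of the multiplier contributes), the fundamental theorem of calculus gives
\begin{equation*}
E^2_I(u(\delta)) - E^2_I(\phi) = \int_0^\delta \Lambda_6(\delta_6; u(t))\,dt.
\end{equation*}
Taking absolute values and invoking the sextilinear estimate \eqref{lambda6} already proved in the previous subsection immediately yields
\begin{equation*}
|E^2_I(u(\delta))| \leq |E^2_I(\phi)| + C N^{-\frac74}\|Iu\|_{X^{0,b}_\delta}^6,
\end{equation*}
and one then simply specializes $b = \frac12+$ to obtain \eqref{almost-CL}.

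There is, however, one point of rigor that must be addressed before the above manipulation is valid: the identity \eqref{der.1} of Lemma \ref{derivative}, and hence \eqref{second-m3}, is a formal computation using the equation \eqref{e-nlsT}, and it holds rigorously for smooth, rapidly decaying solutions. So first I would run the argument for Schwartz data $\phi$, for which the local solution given by Theorem \ref{local-variant} is smooth and all the integrations by parts and differentiations under the integral sign leading to Lemma \ref{derivative} are justified; the quantities $\Lambda_4(\delta_4;u)$ and $\Lambda_6(\delta_6;u)$ are then genuinely absolutely convergent integrals, using the decay estimates of Proposition \ref{prop3.3} together with the Strichartz bounds of Lemma \ref{Lema36}. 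Having established \eqref{almost-CL} for Schwartz data, I would then pass to general $\phi$ with $I\phi \in L^2$ by a density and continuity argument: approximate $\phi$ in the relevant topology by Schwartz functions $\phi_n$, use the well-posedness estimate \eqref{variant-2} and the continuous dependence of the flow to see that $u_n \to u$ in $X^{0,b}_\delta$, and use the continuity of the multilinear functionals $\Lambda_4(\delta_4;\cdot)$ and $\Lambda_6(\delta_6;\cdot)$ on $X^{0,b}_\delta$ (which is precisely the content of \eqref{lambda4} and \eqref{lambda6}, viewed as multilinear — hence continuous — estimates) to pass to the limit in both sides of \eqref{almost-CL}.

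The step I expect to require the most care is this last limiting argument, specifically verifying that $E^2_I(u(\delta))$ depends continuously on the data in a topology controlled by $\|I\phi\|_{L^2}$: the term $E^1_I(u(\delta)) = \|Iu(\delta)\|_{L^2}^2$ is controlled by \eqref{variant-2} and the embedding $X^{0,b}_\delta \hookrightarrow C([0,\delta];L^2)$ for $b>\frac12$, while the correction term $\Lambda_4(\delta_4;u(\delta))$ is controlled at each fixed time by \eqref{lambda4}; one must check that $u_n(\delta) \to u(\delta)$ in $L^2$ (again from the $X^{0,b}_\delta$ convergence and the embedding), so that $\Lambda_4(\delta_4;u_n(\delta)) \to \Lambda_4(\delta_4;u(\delta))$ by multilinearity of $\Lambda_4$ and the bound \eqref{lambda4}. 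Everything else is a direct citation: \eqref{second-m3} for the derivative identity, and \eqref{lambda6} for the size of the resulting error term. I would remark that the exponent $b = \frac12+$ appearing in the statement is exactly what is needed so that the Strichartz estimates of Lemma \ref{Lema36} (which require $b>\frac12$) can be applied in the proof of \eqref{lambda6}, and so that $X^{0,b}_\delta$ embeds into $C([0,\delta];L^2)$.
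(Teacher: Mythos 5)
Your argument is exactly the paper's: integrate the identity \eqref{second-m3} over $[0,\delta]$ and bound the resulting time-integrated sextilinear term by \eqref{lambda6}; the paper states this in one line, while you additionally supply the (standard) density and continuity justification. The proposal is correct and takes essentially the same approach as the paper.
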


\begin{proof}
The proof follows combining \eqref{second-m3} and \eqref{lambda6}.
\end{proof}

%%%%%%%%%%%%%%%%%%%%%%%%%%
\section{proof of the main results}\label{sec-4}
%%%%%%%%%%%%%%%%%%%%%%%%%%
In this section we provide proof of the  main results of this work. 

\begin{proof}[Proof of Theorem \ref{Global-Th}] Let $u_0\in H^s(\R)$, $s>0>-\frac14$. Given any $T>0$, we are interested in extending the local solution to the IVP \eqref{e-nlsT} to the interval $[0, T]$.

To make the analysis a bit easy we use the scaling argument. If $u(x,t)$ solves the IVP \eqref{e-nlsT} with initial data $u_0(x)$ then for $1<\lambda<\infty$, so does $u^{\lambda}(x,t)$ with initial data $u_0^{\lambda}(x)$; where  $u^{\lambda}(x,t)= \lambda^{-\frac32} u(\frac x\lambda, \frac t{\lambda^3})$ and $u_0^{\lambda}(x)=\lambda^{-\frac32}u_0(\frac x\lambda)$.

Our interest is in extending the rescaled solution $u^{\lambda}$ to the bigger time interval $[0, \lambda^3T]$.

Observe that
\begin{equation}\label{g-1}
\|u_0^{\lambda}\|_{{H}^s}\lesssim \lambda^{-1-s}\|u_0\|_{{H}^s}.
\end{equation}
From this observation and \eqref{gwlem12} we have that
\begin{equation}\label{g-2}
E^1_I(u_0^{\lambda})=\|Iu_0^{\lambda}\|_{L^2}^2\lesssim N^{-2s}\lambda^{-2(1+s)}\|u_0\|_{L^2}^2.
\end{equation}

The number $N\gg 1$ will be chosen later suitably. Now we choose the parameter $\lambda=\lambda(N)$ in such a way that $E^1_I(u_0^{\lambda})=\|Iu_0^{\lambda}\|_{L^2}^2$ becomes as small as we please. In fact, for arbitrary $\epsilon>0$, if we choose
\begin{equation}\label{g-4}
\lambda \sim N^{-\frac{s}{1+s}},
\end{equation}
 we can obtain
\begin{equation}\label{g-5}
E^1_I(u_0^{\lambda})=\|Iu_0^{\lambda}\|_{L^2}^2\leq \epsilon.
\end{equation}

From \eqref{g-5} and the variant of the local well-posedness result \eqref{delta-var}, we can guarantee that the rescaled solution $Iu^{\lambda}$ exists in the time interval $[0, 1]$.

Moreover, for this choice of $\lambda$, from \eqref{sec-m1}, \eqref{lambda4}   and \eqref{g-5},  in the time interval $[0, 1]$, we have
\begin{equation}\label{g-6}
|E^2_I(u_0^{\lambda})|\lesssim \|E^1_I(u_0^{\lambda})\|_{H^1}^2 +|\Lambda_4(M_4)|\lesssim \|Iu_0^{\lambda}\|_{L^2}^2 + \|Iu_0^{\lambda}\|_{L^2}^4\leq \epsilon+\epsilon^2\lesssim \epsilon.
\end{equation}

Using the almost conservation law \eqref{almost-CL} for the modified energy, \eqref{variant-2}, \eqref{g-5} and \eqref{g-6},  we obtain
\begin{equation}\label{g-7}
\begin{split}
|E^2_I(u^{\lambda})(1)|&\lesssim |E^2_I(u_0^{\lambda})| +N^{-\frac74}\|Iu^{\lambda}\|_{X_1^{0,{\frac12+}}}^6\\
&\lesssim \epsilon+N^{-\frac74}\epsilon^3\\
&\lesssim \epsilon+N^{-\frac74}\epsilon.
\end{split}
\end{equation}

From \eqref{g-7}, it is clear that we can iterate this process $N^{\frac74}$ times before doubling the modified energy $|E^2(u^{\lambda})|$. Therefore, by taking $N^{\frac74}$ times steps of size $O(1)$, we can extend the rescaled solution to the interval $[0, N^{\frac74}]$. As we are interested in extending the the solution to the interval $[0, \lambda^3T]$, we must select $N=N(T)$ such that $\lambda^3T\leq N^{\frac74}$. Therefore, with the choice of $\lambda$ in \eqref{g-4}, we must have
\begin{equation}\label{g-8}
TN^{\frac{-7-19s}{4(1+s)}}\leq c.
\end{equation}

Hence, for arbitrary $T>0$ and large $N$, \eqref{g-8} is possible if $s>-\frac7{19}$, which is true because we have considered $s>-\frac14$. This completes the proof of the theorem.
\end{proof}

\begin{remark}
From the proof of Theorem \ref{Global-Th} it can be seen that the global well-posedness result might hold for initial data with Sobolev regularity below $-\frac14$ as well provided there is local solution. But, as shown in \cite{XC-04} one cannot obtain the local well-posedness result for such data because the crucial trilinear estimate fails for for $s<-\frac14$.
\end{remark}

%%%%%%%%%%%%%%%%%%%%%%%%%%%%%%%%%%%%%%%%%%%%%%%%%%%%%%%%%%

\vspace{0.7cm}
\noindent
{\bf Acknowledgments.} 
The first author extends thanks to  the Department of Mathematics, UNICAMP, Campinas for the kind hospitality where a significant  part of this work was developed. The second author acknowledges the grants from FAPESP (2020/14833-8) and CNPq (307790/2020-7). \\
%%%%%%%%%%%%%%%%%%%%%%%%%%%%%%%%%%

%%%%%%%%%%%%%%%%%%%%%%%%%%%%%%%%%%%%%%%%%%%%%%%%%%%%%%%%%%

%\noindent
%{\bf Conflict of interest statement.} 
%On behalf of all authors, the corresponding author states that there is no conflict of interest.\\

%\noindent 
%{\bf Data availability statement.} 
%The datasets generated during and/or analysed during the current study are available from the corresponding author on reasonable request.

%%%%%%%%%%%%%%%%%%%%%%%%%%%%%%%%%%%%%%%%%%%%%

\end{document}